\newtheorem{lemma}{Lemma}
\newtheorem{prop}{Proposition}
\newtheorem{theorem}{Theorem}
\newcommand{\cC}{\mathcal{C}}
\newcommand{\cG}{\mathcal{G}}
\begin{document}
\title[Estimation of CO\textsubscript{2} flux]{Estimation of CO\textsubscript{2} flux from targeted satellite observations: a Bayesian approach}
\author{Graham Cox}
\email{ghcox@email.unc.edu}
\address{Department of Mathematics, UNC Chapel Hill, Phillips Hall CB \#3250, Chapel Hill, NC 27599}

\begin{abstract} We consider the estimation of carbon dioxide flux at the ocean--atmosphere interface, given weighted averages of the mixing ratio in a vertical atmospheric column. In particular we examine the dependence of the posterior covariance on the weighting function used in taking observations, motivated by the fact that this function is instrument-dependent, hence one needs the ability to compare different weights. The estimation problem is considered using a variational data assimilation method, which is shown to admit an equivalent infinite-dimensional Bayesian formulation.

The main tool in our investigation is an explicit formula for the posterior covariance in terms of the prior covariance and observation operator. Using this formula, we compare weighting functions concentrated near the surface of the earth with those concentrated near the top of the atmosphere, in terms of the resulting covariance operators.

We also consider the problem of observational targeting, and ask if it is possible to reduce the covariance in a prescribed direction through an appropriate choice of weighting function. We find that this is not the case---there exist directions in which one can \emph{never} gain information, regardless of the choice of weight.

\smallskip
\noindent \textbf{Keywords.} Carbon flux; Variational data assimilation.
\end{abstract}

\maketitle

\section{Introduction}
It is well known that the effects of trace atmospheric gases, such as CO\textsubscript{2}, are important in long-term climate predictions. In monitoring the concentrations of such atmospheric constituents, it is crucial to understand the fluxes between different components of the climate system (i.e. land, atmosphere, and ocean). These fluxes can be estimated from observations of the concentration (or mixing ratio) of the trace gas in question, coupled with an appropriate transport model (see, for instance, \cite{EM89,NE88}). However, such inversions are strongly dependent on the choice of atmospheric transport model (see \cite{Ge02} and references therein), so there is still disagreement regarding annually-averaged flux magnitudes, to say nothing of variations on seasonal (or even shorter) time scales.

Recent attention has focused on the possibility of obtaining more accurate data in the near future, in the form of satellite-based lidar observations. A number of such observational strategies were recently studied in \cite{He10}. Our objective is to compare these strategies using the methodology of variational data assimilation. The primary technical tool in our investigation is an explicit formula for the posterior covariance, in terms of the (instrument-dependent) observation operator. This formula allows us to compare different observational methods via their respective weighting functions, and discuss the possibility of targeting observations, in terms of the resulting posterior covariances.

Throughout we adopt the guiding principle of \cite{S10} (see also \cite{T05})---that one should avoid discretization of a problem for as long as is reasonably possible, in order to distinguish properties of the original continuous system from nonphysical artifacts of the discretization. We are thus led to model the boundary flux as a (time-dependent) function defined over the entire observational window. As a result, the variational problem (and hence the equivalent Bayesian problem) is infinite-dimensional, with the covariance a bounded, selfadjoint operator. Given a finite amount of data, the prior and posterior covariance operators agree on a subspace of finite codimension. The space on which they differ (i.e. where information has been gained) can be explicitly described in terms of the observational weighting function.

The primary contributions of this paper are the following.

\noindent \textit{1. Bayesian interpretation of the regularized variational problem}

We set up a well-posed variational problem (using Tikhonov regularization) and prove that it possesses a unique minimizer. Moreover, we establish $L^2$ growth estimates on the transport equation and conclude that the regularized problem has a Bayesian interpretation. This is important for two reasons. First, it gives a concrete interpretation of the regularization parameters, and explains how they should be chosen using prior information. Second, it allows us to relate the Hessian of the cost functional to the posterior covariance, and hence interpret our variational computations in terms of the ``decrease in covariance" or ``gain in information" resulting from each observation.

\noindent \textit{2. Explicit computation of posterior covariance operator}

We give an explicit formula for the posterior covariance in terms of the prior covariance and the observational weighting function. In particular, the posterior covariance is related to the Fourier coefficients of the weighting function when it is expanded in the modes of a particular Sturm--Liouville operator. These modes depend on the atmospheric diffusion and transport velocity, and hence establish a clear geometric connection between the underlying atmospheric dynamics and the information gain from each observation. This formula allows us to identify the direction of maximal information gain for a given weighting function---this is useful in comparing different weights, and in targeting directions in which one would like to learn something by choosing an appropriate weight.

\noindent \textit{3. Comparison of high and low altitude observations}

Since the weighting function is instrument-dependent, it is essential that we understand the the experimental ramifications of different instrument designs (say, in terms of the posterior covariance), to ensure that we choose the best weight. Of course the notion of ``best" is very subtle, and depends on what information is considered most relevant.

We compare different weighting functions using the explicit covariance formula described above. It is shown that monotone (as a function of altitude) weights guarantee a certain monotonicity property for the posterior covariance operator. It is also shown that weighting functions concentrated near the surface of the Earth are better at estimating the mean flux over a given time window than weights concentrated near the top of the atmosphere. Since the flux is being estimated right at the ocean--atmosphere interface, it is not particularly surprising that a weight concentrated nearby will produce the best results. However, this is only true of the \emph{mean} flux. The situation becomes much more complicated when one considers the entire covariance operator. In fact it can be shown that, given any two weighting functions, say $\rho_1$ and $\rho_2$, there exists a direction in which $\rho_1$ decreases the covariance more than $\rho_2$.

\noindent \textit{4. Discussion of observational targeting and unobservability}

Our covariance formula is also useful for the problem of observational targeting. Specifically, we ask if, given a fixed direction in the function space that the flux belongs to, one can find an observation operator that decreases the covariance in that direction? In particular we just ask whether or not the covariance can be decreased by some nonzero amount in the given direction (and allow for the possibility that the decrease is greater elsewhere). Somewhat surprisingly, we find that this is not possible---there exist directions along which the prior and posterior covariance operators coincide for \emph{any} choice of observational weighing function. This means there are direction that are completely inaccessible to observations made using the family of observation operators considered in this paper. However, the problem of explicitly describing these unobservable directions, and determining their physical significance (if any), is still open. \\

A related concept in determining the information content of a given observation is the so-called ``influence matrix" \cite{CPA04} or ``hat matrix" \cite{HW78}. This is a diagnostic tool in ordinary or generalized least squares regression which measures the sensitivity of the predicted model output with respect to variations in the data. However, it only measures the change in the \emph{observable quantity} (in our case, a weighted average of the carbon mixing ratio in the atmospheric column). This is not well-suited to the present situation, as we are interested in determining the surface flux, which is a function (hence exists an an infinite-dimensional space), so there is much variability that is not captured by the influence matrix. As noted above, there is no universal notion of an ``ideal" observation operator, and so we work directly with the full posterior covariance operator, in order to determine precisely how knowledge of the flux is improved with each observation.

It well known in the variational data assimilation literature (see e.g. \cite{PVT96,TCBL06,TRC93} and references therein) that there is a strong connection between an observation's spatial location, the generalized Fourier modes of the underlying dynamical system, and the overall sensitivity of the assimilation scheme. For instance, if an observation is made near a spatial maximum of the fastest growing mode, it is possible to match the data by effecting a small change in the initial condition (relative to the assumed prior value). While the connection to the geometry of the modes of the underlying dynamical system is certainly reminiscent of the present work, there is a key difference in that we are interested in estimating the flux (a function of time, measured at a single point in space), rather than the initial condition (a function of space, measured at a single point in time). In fact this distinction is crucial to the phenomenon of unobservable subspaces that we will encounter below.


\section{The transport model: existence and basic estimates}
\label{sec:model}

We model the atmosphere by a vertical column of finite height $h$. Letting $q(z,t)$ denote the mass fraction of carbon dioxide at height $z$ and time $t$, we assume $q$ evolves by a linear advection--diffusion equation
\begin{align}
	q_t + (wq)_z = (kq_z)_z
	\label{eqn:evol}
\end{align}
where we have used subscripts to denote partial derivatives, and $w$ and $k$ denote the atmospheric velocity and diffusion, respectively, which are allowed to vary with height but are assumed to be constant in time. (See \cite{EM89} and \cite{NE88} for applications of similar linear models to flux estimation problems.) The following assumptions will frequently be needed in the remainder of the paper.
\begin{align*}
	k,w \in C^2[0,h] \tag{A1} \label{ass:reg} \\
	k(z) > 0 \text{ for } z \in [0,h] \tag{A2} \label{ass:ell} \\
	w(0) = w(h) = 0 \tag{A3} \label{bc:noflux}
\end{align*}
The following existence result is standard (cf. Corollary 5.2 of \cite{F64}).

\begin{prop} \label{prop:exist}
Suppose \eqref{ass:reg} and \eqref{ass:ell} are satisfied, and let $T > 0$. For $F \in C[0,T]$ and $q_0 \in C^1[0,h]$, there exists a unique solution to (\ref{eqn:evol}) on $[0,h] \times (0,T]$ that satisfies the boundary conditions
\begin{align}
	q_z(0,t) &= -F(t) \label{bc:flux} \\
	q_z(h,t) & =0 \label{bc:upperflux}
\end{align}
and the initial condition
\begin{align}
	q(z,0) = q_0(z).
	\label{eqn:initial}
\end{align}
\end{prop}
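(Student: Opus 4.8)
The plan is to read \eqref{eqn:evol} together with \eqref{bc:flux}--\eqref{eqn:initial} as a uniformly parabolic second boundary value problem and to invoke the fundamental-solution machinery of \cite{F64}. Writing the equation in non-divergence form,
\begin{equation*}
q_t = k\,q_{zz} + (k_z - w)\,q_z - w_z\,q,
\end{equation*}
assumptions \eqref{ass:reg} and \eqref{ass:ell} together with compactness of $[0,h]$ give constants $0 < k_0 \le k(z) \le k_1$ and bounded coefficients of class $C^1$, so the spatial operator is uniformly elliptic with the regularity required by the parametrix construction, and \eqref{bc:flux}--\eqref{bc:upperflux} are Neumann data. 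First I would construct the Neumann Green's function $N(z,\xi;t,\tau)$ for this operator on $[0,h]$ following \cite{F64}, and represent the candidate solution by Green's identity as the sum of a volume potential against the initial datum $q_0$ and a boundary potential of the form $\int_0^t N(z,0;t,\tau)F(\tau)\,d\tau$ carrying the flux at $z=0$ (the contribution at $z=h$ dropping out by \eqref{bc:upperflux}). Because $N$ and its derivatives obey the standard Gaussian bounds, these integrals are well defined for $F\in C[0,T]$ and $q_0\in C^1[0,h]$, and differentiation under the integral sign shows the result solves \eqref{eqn:evol} classically on $[0,h]\times(0,T]$ while attaining the data continuously; this is the content of Corollary 5.2 of \cite{F64}.

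An alternative, should one prefer to quote existence only for homogeneous boundary conditions, is to lift the data: fixing $\psi\in C^2[0,h]$ with $\psi'(0)=-1$ and $\psi'(h)=0$ (e.g.\ $\psi(z)=-z+z^2/(2h)$), the function $\phi(z,t)=F(t)\psi(z)$ reproduces \eqref{bc:flux}--\eqref{bc:upperflux}, and $v=q-\phi$ solves a parabolic equation with homogeneous Neumann conditions. The price is a source term containing $\phi_t=F'(t)\psi$, which is unavailable since $F$ is only continuous; I would remove this obstruction by approximating $F$ by $F_n\in C^1[0,T]$ with $F_n\to F$ uniformly, solving each regularized problem, and passing to the limit with the aid of the stability estimate below.

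Both routes need uniqueness, which (along with the stability just mentioned) I would obtain from an energy estimate. If $u$ solves the homogeneous problem with zero initial and boundary data, then using the divergence form $u_t=(ku_z-wu)_z$ and integrating by parts,
\begin{equation*}
\frac{d}{dt}\,\tfrac12\!\int_0^h u^2\,dz = \Big[u\,(ku_z-wu)\Big]_0^h - \int_0^h \big(k\,u_z^2 - w\,u\,u_z\big)\,dz .
\end{equation*}
The boundary term vanishes because $u_z=0$ at $z=0,h$ while $w(0)=w(h)=0$ by \eqref{bc:noflux}, and integrating the cross term by parts (again using \eqref{bc:noflux}) gives
\begin{equation*}
\frac{d}{dt}\,\tfrac12\!\int_0^h u^2\,dz = -\int_0^h k\,u_z^2\,dz - \tfrac12\int_0^h w_z\,u^2\,dz \le C\!\int_0^h u^2\,dz ,
\end{equation*}
with $C=\tfrac12\|w_z\|_\infty$. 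Since $u(\cdot,0)=0$, Gronwall's inequality forces $u\equiv 0$; applied to $q_n-q_m$ the same computation yields $\|q_n-q_m\|_{L^2}\le C'\|F_n-F_m\|$, which drives the limiting argument of the previous paragraph.

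The main obstacle is reconciling the classical regularity built into the fundamental-solution construction with the weak hypotheses $F\in C[0,T]$ and $q_0\in C^1[0,h]$, in the absence of any compatibility condition at the corners $(0,0)$ and $(h,0)$. This is precisely where the parabolic smoothing encoded in the Gaussian bounds on $N$ does the work, producing a solution that is smooth for $t>0$ yet continuous up to $t=0$; the energy estimate then controls the passage to the limit and pins down uniqueness. I note that \eqref{bc:noflux} is used twice in an essential way above, which is reassuring, since it is exactly the no-flux condition at the top and bottom of the column that makes the total-mass balance---and hence the energy method---close cleanly.
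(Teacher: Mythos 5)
The paper offers no proof of this proposition at all: it cites Corollary 5.2 of \cite{F64}, whose content is precisely your first route (the parametrix/Green's function construction for the second, i.e.\ Neumann, initial--boundary value problem, with the potential representation absorbing the mere continuity of $F$ and the absence of corner compatibility). That part of your proposal is a faithful reconstruction of the cited standard argument and is sound.

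The genuine gap is in your uniqueness and stability arguments: they invoke \eqref{bc:noflux}, which is \emph{not} a hypothesis of this proposition. The statement assumes only \eqref{ass:reg} and \eqref{ass:ell}; the paper introduces $w(0)=w(h)=0$ only afterwards, for the physical interpretation \eqref{eqn:totalflux} and for the energy estimate of Proposition \ref{prop:energy}. You remark that \eqref{bc:noflux} ``is used twice in an essential way,'' but this should have been a warning rather than a reassurance: as written, your energy computation proves uniqueness only for velocities vanishing at the endpoints, i.e.\ a weaker statement than the one asserted. Concretely, with $u_z=0$ at $z=0,h$ the boundary term is $\bigl[u(ku_z-wu)\bigr]_0^h = w(0)\,u(0,t)^2 - w(h)\,u(h,t)^2$, which does not vanish without \eqref{bc:noflux}, and your second integration by parts produces another such term. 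The repair is either (i) obtain uniqueness from the parabolic maximum principle together with Hopf's boundary-point lemma, which needs no condition on $w$ at the boundary and is how \cite{F64} proceeds, or (ii) keep the energy method but absorb the surviving boundary terms via a trace interpolation inequality of the form $u(0,t)^2 \le \epsilon\|u_z(\cdot,t)\|_{L^2}^2 + C(\epsilon)\|u(\cdot,t)\|_{L^2}^2$ before applying Gronwall.

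A second, smaller gap sits in your lifting alternative: the claim that ``the same computation'' yields $\|q_n-q_m\|_{L^2}\le C'\|F_n-F_m\|$ is not right, because $q_n-q_m$ has \emph{nonzero} Neumann data $-(F_n-F_m)$ at $z=0$, so a boundary term $k(0)\,u(0,t)\bigl(F_n(t)-F_m(t)\bigr)$ survives the integration by parts; controlling it again requires the trace of $u$ at $z=0$, i.e.\ exactly the interpolation machinery the paper develops later for Proposition \ref{prop:energy}. Since your first route needs none of this, the cleanest fix is to rely on it alone, with maximum-principle uniqueness.
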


This solution will be denoted $q^F(z,t)$. To understand the physical significance of the function $F(t)$ in the boundary condition \eqref{bc:flux}, we further assume that \eqref{bc:noflux} holds, and compute
\begin{align}
	\frac{d}{dt} \int_0^h q^F(z,t) dz = k(0) F(t)
\label{eqn:totalflux}
\end{align}
for any $t \in (0,T]$. Thus, up to a constant, $F$ denotes the net transport of carbon into or out of the atmosphere. In this computation \eqref{bc:noflux} ensures there is no transport into or out of the atmosphere on account of the advective term $w(z)$. In a 3-dimensional model, the corresponding boundary condition is that the velocity field $\mathbf{w}$ has vanishing normal component at the top and bottom of the atmosphere. Similarly, \eqref{bc:upperflux} ensures there is no diffusive flux at the top of the atmosphere. Our sign convention is chosen so that a positive value of $F$ corresponds to a net flux of carbon into the atmosphere.

We next derive an \textit{a priori} estimate for $q^F$ that will be paramount in establishing the Bayesian formulation in Section \ref{sec:bayes} below. Observe that \eqref{ass:ell} and the continuity of $k$ (which follows from \eqref{ass:reg}) together imply the existence of a number $\epsilon > 0$ such that
\[
	k(z) \geq \epsilon
\]
for all $z \in [0,h]$.

\begin{prop} \label{prop:energy}
Suppose \eqref{ass:reg}, \eqref{ass:ell} and \eqref{bc:noflux} are satisfied. There exists a positive constant $K$, depending on $\sup |w|$, $k(0)$, $\epsilon$ and $h$, such that if $T > 0$ and $F \in C[0,T]$, then
\begin{align}
	\|q^F(\cdot,t)\|^2_{L^2(0,h)} \leq K e^{Kt} \left[ \left(1+t \right) \|q_0\|^2_{L^2(0,h)}  + \left(1 + t^2 \right) \|F\|^2_{L^2(0,t)}  \right]
\end{align}
for any $t \in [0, T]$, where $q^F$ denotes the unique solution to \eqref{eqn:evol}--\eqref{eqn:initial} given by Proposition \ref{prop:exist}.
\end{prop}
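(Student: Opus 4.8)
The plan is to use the classical energy method: multiply the evolution equation \eqref{eqn:evol} by $q^F$, integrate over the column $[0,h]$, and thereby derive a differential inequality for $t \mapsto \|q^F(\cdot,t)\|_{L^2}^2$ that can be closed with Gr\"onwall's inequality. The regularity furnished by Proposition \ref{prop:exist} justifies differentiating under the integral sign and all the integrations by parts below. Writing $q = q^F$ for brevity and using \eqref{eqn:evol} in the form $q_t = (kq_z)_z - (wq)_z$, I would compute
\[
	\frac{1}{2}\frac{d}{dt}\|q(\cdot,t)\|_{L^2}^2 = \int_0^h q(kq_z)_z \, dz - \int_0^h q(wq)_z \, dz .
\]
Integrating the first term by parts and inserting the boundary conditions \eqref{bc:flux} and \eqref{bc:upperflux} produces $k(0)F(t)q(0,t) - \int_0^h k q_z^2\,dz$; the diffusive integral is the crucial dissipative term, bounded above by $-\epsilon\|q_z\|_{L^2}^2$. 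Integrating the second (advective) term by parts and using the no-flux assumption \eqref{bc:noflux}, which kills the boundary contribution since $w(0)=w(h)=0$, leaves $\int_0^h w q q_z\,dz$. I would estimate this by $\sup|w|\,\|q\|_{L^2}\|q_z\|_{L^2}$ and split it with Young's inequality so that the $\|q_z\|_{L^2}^2$ part is absorbed into a fraction of the dissipation; this is precisely why the constant depends on $\sup|w|$ and $\epsilon$ rather than on $\sup|w_z|$, and so I deliberately avoid the second integration by parts that would introduce $w_z$.

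The main obstacle is the boundary flux term $k(0)F(t)q(0,t)$, since it involves the pointwise trace $q(0,t)$, which is not controlled by $\|q\|_{L^2}$ alone. To handle it I would invoke the one-dimensional trace inequality obtained by writing $q(0,t) = q(z,t) - \int_0^z q_z\,ds$, squaring, applying Cauchy--Schwarz, and averaging over $z \in [0,h]$, which yields
\[
	q(0,t)^2 \le \tfrac{2}{h}\|q(\cdot,t)\|_{L^2}^2 + 2h\,\|q_z(\cdot,t)\|_{L^2}^2 .
\]
Applying Young's inequality to $k(0)|F(t)|\,|q(0,t)|$ with a weight chosen so that the resulting $\|q_z\|_{L^2}^2$ is absorbed by the remaining dissipation then leaves a clean bound of the form $\frac{d}{dt}\|q\|_{L^2}^2 \le C_1\|q\|_{L^2}^2 + C_2 F(t)^2$, with $C_1$ and $C_2$ depending only on $\sup|w|$, $k(0)$, $\epsilon$ and $h$, as claimed.

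Finally I would integrate this differential inequality. Gr\"onwall's inequality gives
\[
	\|q(\cdot,t)\|_{L^2}^2 \le e^{C_1 t}\|q_0\|_{L^2}^2 + C_2 e^{C_1 t}\|F\|_{L^2(0,t)}^2 ,
\]
after bounding $\int_0^t e^{C_1(t-s)}F(s)^2\,ds \le e^{C_1 t}\|F\|_{L^2(0,t)}^2$. The stated estimate, with its prefactor $Ke^{Kt}$ and polynomial factors $(1+t)$ and $(1+t^2)$, then follows at once by choosing $K$ sufficiently large, since these polynomial factors allow more room than the sharper exponential bound requires. The only point demanding genuine care is the balancing of constants in the two Young's-inequality steps, so that both $\|q_z\|_{L^2}^2$ contributions are truly absorbed by $-\int_0^h k q_z^2\,dz$; once that bookkeeping is arranged, the remainder is routine.
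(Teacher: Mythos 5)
Your proof is correct, and it handles the one genuinely delicate step---controlling the boundary trace $q(0,t)$---by a different device than the paper. Both arguments start from the same energy identity
\[
	\tfrac{1}{2}\tfrac{d}{dt}\|q\|_{L^2}^2 = k(0)\,q(0,t)F(t) - \int_0^h k q_z^2\,dz + \int_0^h w q q_z\,dz,
\]
and both absorb the $\|q_z\|_{L^2}^2$ contributions into the dissipation via Young's inequality. The difference is in the trace bound: you use the standard one-dimensional interpolation inequality $q(0,t)^2 \le \tfrac{2}{h}\|q\|_{L^2}^2 + 2h\|q_z\|_{L^2}^2$, whereas the paper exploits the mass-conservation identity \eqref{eqn:totalflux}: by the mean value theorem there is a point $z_*$ where $q(z_*,t)$ equals the column average (Lemma \ref{lemma:qmin}), which is controlled by $\|q_0\|_{L^2}$ and the flux history $\sqrt{t}\,\|F\|_{L^2(0,t)}$, and then the fundamental theorem of calculus gives the pointwise bound \eqref{eqn:qbound} involving $\|q_0\|_{L^2}^2$, $\|q_z\|_{L^2}^2$ and $t\|F\|^2_{L^2(0,t)}$. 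That choice is exactly what produces the inhomogeneous terms $\|q_0\|^2_{L^2}$ and $t\|F\|^2_{L^2(0,t)}$ in the paper's Gronwall inequality, and hence the polynomial factors $(1+t)$ and $(1+t^2)$ in the stated estimate. Your route yields the cleaner differential inequality $\tfrac{d}{dt}\|q\|_{L^2}^2 \le C_1\|q\|_{L^2}^2 + C_2 F(t)^2$ and therefore the sharper bound $\|q(\cdot,t)\|_{L^2}^2 \le e^{C_1 t}\bigl(\|q_0\|_{L^2}^2 + C_2\|F\|_{L^2(0,t)}^2\bigr)$, from which the proposition follows by taking $K = \max(C_1, C_2, 1)$, since the polynomial factors only add slack; your constants depend on exactly the quantities listed in the statement. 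What the paper's longer detour buys is a pointwise bound \eqref{eqn:qbound} valid at \emph{every} $z$, tied to the physically meaningful conservation law, but for the purposes of this proposition your argument is both simpler and quantitatively stronger.
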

%

The estimate is proved by an application of Gronwall's inequality. Along the way it will be necessary to show that $\|q^F_z\|_{L^2(0,h)}$ controls the pointwise values of $q^F$. To do so, we first establish that there is at least one point in space where the solution is not too large. To simplify notation we abbreviate $q^F$ to $q$ for the remainder of the section.

\begin{lemma} For each $t_* \geq 0$ there exists $z_* \in [0,h]$ such that
\begin{align*}
	q(z_*,t_*) = \frac{1}{h} \left( \int_0^h q_0(z) dz + k(0) \int_0^{t_*} F(t) dt \right).
\end{align*}
\label{lemma:qmin}
\end{lemma}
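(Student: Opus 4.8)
The key observation is that the right-hand side of the stated identity is precisely $\frac{1}{h}\int_0^h q(z,t_*)\,dz$, the spatial average of the solution at time $t_*$. Once this is recognized, the lemma reduces to the mean value theorem for integrals. The plan is therefore to first rewrite the prescribed expression as a spatial average, using the conservation identity \eqref{eqn:totalflux}, and then to exhibit a point at which the continuous function $q(\cdot,t_*)$ attains its mean.

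First I would integrate the total-flux identity in time. Since $\frac{d}{dt}\int_0^h q(z,t)\,dz = k(0)F(t)$ holds for every $t \in (0,t_*]$ by \eqref{eqn:totalflux}, integrating from $0$ to $t_*$ and using that the solution attains its initial data continuously (so that $\int_0^h q(z,t)\,dz \to \int_0^h q_0(z)\,dz$ as $t\to 0^+$, by \eqref{eqn:initial}) yields
\begin{align*}
	\int_0^h q(z,t_*)\,dz = \int_0^h q_0(z)\,dz + k(0)\int_0^{t_*} F(t)\,dt.
\end{align*}
Dividing by $h$ shows that the right-hand side of the lemma is exactly $\frac{1}{h}\int_0^h q(z,t_*)\,dz$.

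It then remains only to produce a point $z_*$ at which $q$ equals this average. Since the solution furnished by Proposition \ref{prop:exist} is continuous in $z$ on $[0,h]$ for each fixed $t_*$, the mean value theorem for integrals guarantees a $z_* \in [0,h]$ with $q(z_*,t_*) = \frac{1}{h}\int_0^h q(z,t_*)\,dz$, which is the desired identity. The boundary case $t_* = 0$ is handled directly: the flux term vanishes and one applies the same mean value theorem to the continuous function $q_0$.

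I do not anticipate a serious obstacle, as the argument rests on two standard facts—conservation of total mass and the integral mean value theorem. The only step requiring care is the justification of integrating \eqref{eqn:totalflux} down to $t=0$, which relies on the continuity of $q$ up to the initial time. This is part of the content of Proposition \ref{prop:exist}, and I would simply confirm that the regularity asserted there suffices to pass to the limit $t\to 0^+$ in the total-mass integral, after which the remainder of the proof is immediate.
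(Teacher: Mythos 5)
Your proof is correct and follows exactly the paper's argument: integrate the conservation identity \eqref{eqn:totalflux} in time to express the right-hand side as the spatial average of $q(\cdot,t_*)$, then apply the mean value theorem for integrals. The extra care you take with the limit $t \to 0^+$ and the case $t_* = 0$ is a sound (if brief) elaboration of details the paper leaves implicit.
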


\begin{proof} Integrating \eqref{eqn:totalflux} with respect to $t$ we find
\begin{align*}
	\int_0^h q(z,t_*) dz = \int_0^h q_0(z) dz + k(0) \int_0^{t_*} F(t) dt
\end{align*}
and the result follows from the mean value theorem for integrals.
\end{proof}

The Cauchy--Schwarz inequality implies
\begin{align}
	|q(z_*,t_*)| \leq \frac{1}{h} \left( \sqrt{h} \|q_0\|_{L^2(0,h)} + k(0) \sqrt{t_*} \|F\|_{L^2(0,t_*)} \right),
	\label{eqn:qmin}
\end{align}
so we have found a point in space where the value of $q$ is controlled by the flux and the initial data. This allows us to estimate $q$ at any other point.

\begin{lemma} For any $(z,t) \in [0,h] \times (0,T]$ we have
\begin{align}
	|q(z,t)|^2 \leq A \left( \|q_0\|_{L^2(0,h)}^2 + \|q_z(\cdot,t) \|_{L^2(0,h)}^2 + t \|F\|^2_{L^2(0,t)} \right), \label{eqn:qbound}
\end{align}
where $A > 0$ depends only on $h$ and $k(0)$.
\end{lemma}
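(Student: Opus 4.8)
The plan is to compare $q$ at the arbitrary point $(z,t)$ with its value at the ``good point'' $z_*$ supplied by Lemma~\ref{lemma:qmin}, and to absorb the spatial discrepancy into the $L^2$ norm of $q_z$. First I would fix $(z,t)$ and apply Lemma~\ref{lemma:qmin} with $t_* = t$, producing a point $z_* \in [0,h]$ at which $|q(z_*,t)|$ is controlled by the explicit bound \eqref{eqn:qmin}. This reduces the problem to estimating the difference $q(z,t) - q(z_*,t)$.

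For that difference I would invoke the fundamental theorem of calculus in the spatial variable,
\[
	q(z,t) - q(z_*,t) = \int_{z_*}^{z} q_z(s,t)\, ds,
\]
which is legitimate since $q(\cdot,t)$ is $C^1$ on $[0,h]$ by Proposition~\ref{prop:exist}. The triangle and Cauchy--Schwarz inequalities then give
\[
	|q(z,t)| \leq |q(z_*,t)| + \int_0^h |q_z(s,t)|\, ds \leq |q(z_*,t)| + \sqrt{h}\, \|q_z(\cdot,t)\|_{L^2(0,h)}.
\]

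To reach the squared estimate \eqref{eqn:qbound}, I would square this inequality using $(a+b)^2 \leq 2a^2 + 2b^2$, and then substitute the bound \eqref{eqn:qmin} for $|q(z_*,t)|$ (again with $t_* = t$). Expanding the right-hand side of \eqref{eqn:qmin} by the same elementary inequality separates the contributions of $\|q_0\|_{L^2(0,h)}$ and $\sqrt{t}\,\|F\|_{L^2(0,t)}$; collecting the three resulting terms yields a bound of exactly the claimed form, with the factor $t$ appearing in front of $\|F\|^2_{L^2(0,t)}$ as in \eqref{eqn:qbound}.

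I do not expect any serious obstacle here: the argument is elementary once the point $z_*$ is in hand, and the only step requiring care is the bookkeeping of constants. One must verify that after squaring and merging \eqref{eqn:qmin} with the $\sqrt{h}\,\|q_z\|_{L^2}$ term, every constant arising is a function of $h$ and $k(0)$ alone, with no residual dependence on $T$, on $\sup|w|$, or on $\epsilon$; the entire $t$-dependence should remain confined to the explicit factor multiplying $\|F\|^2_{L^2(0,t)}$, matching the statement of the lemma.
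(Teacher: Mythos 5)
Your proposal is correct and follows essentially the same argument as the paper: choose $z_*$ via Lemma~\ref{lemma:qmin}, write $q(z,t) = q(z_*,t) + \int_{z_*}^z q_z$, bound the integral by $\sqrt{h}\,\|q_z(\cdot,t)\|_{L^2(0,h)}$ via Cauchy--Schwarz, and combine with \eqref{eqn:qmin}. The constant bookkeeping works out as you describe, with $A$ depending only on $h$ and $k(0)$.
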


\begin{proof} Let $t_* \in [0,T]$, and choose $z_*$ according to Lemma \ref{lemma:qmin}. Then for any $z \in [0,h]$ we can write
\begin{align*}
	q(z,t_*) = q(z_*,t_*) + \int_{z_*}^z q_z(\zeta, t_*) d\zeta.
\end{align*}
We observe that
\begin{align*}
	\int_{z_*}^z q_z(\zeta, t_*) d\zeta \leq \sqrt{h} \|q_z(\cdot,t_*) \|_{L^2(0,h)},
\end{align*}
with the right-hand side independent of $z_*$. Since $t_* \in (0,T]$ was arbitrary, the result follows from \eqref{eqn:qmin}.
\end{proof}

We are now ready to prove the main energy estimate.

\begin{proof}[Proof of Proposition \ref{prop:energy}] Differentiating and then integrating by parts, we find that
\begin{align*}
    \frac{1}{2} \frac{d}{dt} \int_0^h q(z,t)^2 dz = k(0,t) q(0,t) F(t) - \int_0^h k q_z^2 dz + \int_0^h w q q_z dz.
\end{align*}
For any positive constant $\alpha$, the inequality of arithmetic and geometric means implies
\begin{align*}
	q(0,t) F(t) &\leq \frac{\alpha}{2} |q(0,t)|^2 + \frac{1}{2 \alpha} |F(t)|^2 \\
	& \leq \frac{\alpha A}{2} \left( \|q_0\|_{L^2(0,h)}^2 + \|q_z(\cdot,t) \|_{L^2(0,h)}^2 + t \|F\|^2_{L^2(0,t)} \right) + \frac{1}{2\alpha} |F(t)|^2,
\end{align*}
where in the second inequality we have used \eqref{eqn:qbound} to estimate $|q(0,t)|$.
Similarly,
\begin{align*}
	\int_0^h w q q_z dz \leq \frac{\sup |w|}{2} \left( \beta \|q_z\|^2 + \beta^{-1} \|q\|^2 \right)
\end{align*}
for any $\beta > 0$. Choosing $\alpha$ and $\beta$ small enough that
\begin{align*}
	\frac{\alpha A k(0)}{2} + \frac{\beta \sup |w|}{2} \leq \epsilon
\end{align*}
we arrive at the inequality
\begin{align*}
    \frac{d}{dt} \|q(\cdot,t)\|_{L^2(0,h)}^2 &\leq K \left[ \|q_0\|_{L^2(0,h)}^2 + \|q(\cdot,t)\|_{L^2(0,h)}^2 + |F(t)|^2 + t \|F\|^2_{L^2(0,t)} \right],
\end{align*}
where $K$ depends on $\sup|w|$, $k(0)$, $\epsilon$ and $h$. The result then follows from the differential form of Gronwall's inequality (see, for instance, \cite{E10}).
\end{proof}


\section{Lidar observations and the variational problem}

For simplicity we restrict our attention to satellite-based lidar (light radar) measurements of the carbon mixing ratio---see \cite{K04} for a physical implementation and preliminary sensitivity analysis of a DIAL (differential lidar) system. In this scenario each observation is given by a weighted average of the mixing ratio over the atmospheric column (cf. Equation (4) in \cite{He10}), with respect to a \emph{weighting function} $\rho(z)$ that depends on the observational instrument as well as background atmospheric factors, such as temperature and surface albedo.

Given such a weighting function, we define the corresponding \emph{observation operator} $H: L^2(0,h) \rightarrow \mathbb{R}$ by
\begin{align}
	H q := \int_0^h \rho(z) q(z) dz.
	\label{eqn:obs}
\end{align}
Since the weighting function is instrument dependent we will think of it as a variable, and study the extent to which one can maximize the information gained from each observation through judicious choice of $\rho$. This approach is justified by the strong qualitative dependence of $\rho$ on the observational frequency (cf. Figure 1 in \cite{E08} and Figure 3 in \cite{He10}).

We assume throughout that $\rho$ is a nonnegative, square-integrable function, hence $H$ is a bounded operator, with adjoint $H^*: \mathbb{R} \rightarrow L^2(0,h)$ satisfying
\[
	\left<H f, a \right>_{\mathbb{R}} = \left<f, H^*a \right>_{L^2(0,h)}
\]
for each $f \in L^2(0,h)$ and $a \in \mathbb{R}$. It follows easily that $H^*a = a\rho$ for $a \in \mathbb{R}$.

Given such weighted observations $y_1, \ldots, y_N$ of the mixing ratio at times $t_1 < \cdots < t_N$, we need to estimate the boundary flux $F(t)$ that best matches the observations, in a sense to be made precise below. We assume that the $i$th observation is subject to normally distributed noise, with variance $r_i^2$ , and recall that $q^F$ denotes the solution to the carbon transport equation \eqref{eqn:evol} with prescribed boundary flux $F$ (which necessarily exists by Proposition \ref{prop:exist}). The problem of finding $F$ that minimizes
\begin{align*}
	\frac{1}{2} \sum_{i=1}^N r_i^{-2} \left| H q^F(\cdot, t_i) - y_i \right|^2,
\end{align*}
is ill-posed, so it is necessary to regularize the problem in order to obtain a meaningful answer. We follow the approach of Tikhonov (see e.g. \cite{EH96,F74,L71,T63}) and define a regularized cost functional
\begin{align}
	J(F) = \frac{1}{2} \sum_{i=1}^N r_i^{-2} \left| H q^F(\cdot, t_i) - y_i \right|^2 + \frac{1}{2} \|\mathcal{C}_0^{-1/2}(F - F_0) \|_{L^2(0,t_N)}^2,
	\label{cost}
\end{align}
for some given function $F_0 \in L^2(0,t_N)$ and selfadjoint, positive semidefinite, trace class operator $\cC_0$ on $L^2(0,t_N)$. We define the associated \emph{Cameron--Martin space} by $E := \text{Im}(\cC_0^{1/2})$.

Note that the regularized cost functional in \eqref{cost} is formally the same as the cost function used in 4D-Var data assimilation (see \cite{DT86,L86}, and \cite{S10} for the infinite-dimensional case). However, we will refrain from calling our assimilation scheme ``4D-Var" because it is in fact only two-dimensional (counting both spatial and temporal variables) on account of the simplified physical model under consideration.

An important example is $\cC_0 = - \sigma^2 \Delta^{-1}$, where $\Delta = (d/dt)^2$ with domain $H^2(0,t_N) \cap H^1_0(0,t_N)$, and $\sigma > 0$. In this case $E = H^1_0(0,t_N)$ and the cost functional is
\begin{align*}
	J(F) = \frac{1}{2} \sum_{i=1}^N r_i^{-2} \left| H q^F(\cdot, t_i) - y_i \right|^2 + \frac{1}{2\sigma^2} \|F - F_0 \|_{H^1_0(0,t_N)}^2.
\end{align*}

There is a standard existence result for the regularized problem.

\begin{prop} \label{prop:Jmin}
If $E$ is compactly embedded in $L^2(0,t_N)$, then there exists a unique function $\bar{F} \in F_0 + E$ such that
\[
	J(\bar{F}) = \inf_{F \in F_0 + E} J(F).
\]
\end{prop}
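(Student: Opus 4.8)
The plan is to carry out the direct method of the calculus of variations on the Hilbert space $E$, exploiting the fact that the data-misfit term is convex while the regularization term is strictly convex and coercive. First I would reformulate the problem by writing $F = F_0 + u$ with $u \in E$ and equipping $E = \mathrm{Im}(\cC_0^{1/2})$ with the inner product $\langle u, v\rangle_E = \langle \cC_0^{-1/2} u, \cC_0^{-1/2} v\rangle_{L^2(0,t_N)}$, under which it is a Hilbert space with $\|u\|_E = \|\cC_0^{-1/2} u\|_{L^2(0,t_N)}$. The key structural observation is that, since \eqref{eqn:evol} is linear in $F$ and the initial data $q_0$ is fixed, the map $F \mapsto q^F(\cdot, t_i)$ is affine; composing with the bounded functional $H$ shows that each $u \mapsto H q^{F_0 + u}(\cdot, t_i)$ is a continuous affine map from $E$ to $\mathbb{R}$. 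Writing $\tilde J(u) := J(F_0 + u)$, the problem becomes the minimization of $\tilde J$ over the Hilbert space $E$.

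Second, I would record the relevant convexity and coercivity. Each data-misfit term $r_i^{-2}|H q^{F_0+u}(\cdot,t_i) - y_i|^2$ is the square of a continuous affine functional of $u$, hence convex, and the regularization term equals $\tfrac12\|u\|_E^2$, which is strictly convex. Therefore $\tilde J$ is strictly convex, which immediately yields uniqueness of the minimizer. Coercivity is equally immediate: since the data-misfit sum is nonnegative, $\tilde J(u) \geq \tfrac12\|u\|_E^2 \to \infty$ as $\|u\|_E \to \infty$.

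Third, for existence I would take a minimizing sequence $(u_n) \subset E$; coercivity bounds it in $E$, and since $E$ is a Hilbert space I may pass to a subsequence with $u_n \rightharpoonup u^*$ weakly in $E$. The regularization term is weakly lower semicontinuous, so $\tfrac12\|u^*\|_E^2 \leq \liminf \tfrac12 \|u_n\|_E^2$. For the data-misfit term I would invoke the compact embedding $E \hookrightarrow L^2(0,t_N)$ to upgrade weak convergence in $E$ to strong convergence $u_n \to u^*$ in $L^2(0,t_N)$; combined with continuity of $F \mapsto H q^F(\cdot, t_i)$ from $L^2(0,t_N)$ to $\mathbb{R}$, this gives $H q^{F_0 + u_n}(\cdot,t_i) \to H q^{F_0 + u^*}(\cdot,t_i)$, so the data-misfit term converges. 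Adding the two pieces yields $\tilde J(u^*) \leq \liminf \tilde J(u_n)$, so $u^*$ is a minimizer and $\bar F = F_0 + u^*$ is the desired function.

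The one place where something beyond abstract convex analysis is needed—and the step I would treat most carefully—is the continuity of the observation map $F \mapsto H q^F(\cdot, t_i)$ from $L^2(0,t_N)$ to $\mathbb{R}$. This is exactly where Proposition \ref{prop:energy} enters: the energy estimate bounds $\|q^F(\cdot,t_i)\|_{L^2(0,h)}$ by $\|F\|_{L^2(0,t_i)} \leq \|F\|_{L^2(0,t_N)}$, up to the fixed affine contribution of $q_0$, so the map $F \mapsto q^F(\cdot,t_i)$ is bounded, and boundedness of $H$ completes the argument. The compact embedding hypothesis is then precisely what makes the (convex, but a priori only weakly lower semicontinuous) data-misfit term weakly \emph{continuous} along the minimizing sequence, which allows the direct method to close cleanly.
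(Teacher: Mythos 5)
Your proof is correct, but it takes a genuinely different route from the paper. The paper's proof is a verification-plus-citation argument: it introduces the forward map $\cG(F) = \left( Hq^F(\cdot,t_1), \ldots, Hq^F(\cdot,t_N) \right)$, extends it from $C[0,t_N]$ to all of $L^2(0,t_N)$ by density using the energy estimate of Proposition \ref{prop:energy}, and then invokes Theorem 5.4 of \cite{S10}, whose two hypotheses --- an exponential growth bound $|\cG(F)| \leq \exp\left( \epsilon \|F\|^2 + M \right)$ and local Lipschitz continuity of $\cG$ on bounded balls --- are immediate consequences of that same energy estimate. You instead run the direct method of the calculus of variations from scratch, exploiting the linearity of the forward problem: strict convexity of $\tilde J$ gives uniqueness, and coercivity plus weak compactness plus the compact embedding gives existence. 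Both arguments rest on the identical analytic ingredient (Proposition \ref{prop:energy}, which is what makes $F \mapsto Hq^F(\cdot,t_i)$ well defined and Lipschitz on $L^2(0,t_N)$), but they package it differently: the paper's route is shorter on the page and applies verbatim to nonlinear forward maps, where convexity is unavailable; your route is self-contained and makes the uniqueness assertion transparent, which is a real advantage, since abstract existence theorems of this type do not by themselves address uniqueness --- in the linear setting uniqueness really does come from the strict convexity you identify. Two small remarks. First, you treat $q^F$ as defined for every $F \in L^2(0,t_N)$, whereas Proposition \ref{prop:exist} only constructs it for continuous $F$; the paper is explicit about extending $\cG$ by density, and your affine-plus-Lipschitz estimate is exactly what justifies that extension, so the omission is cosmetic, but it should be stated. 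Second, the compact embedding is not actually needed where you deploy it: your data-misfit term is convex and continuous on $E$ (the embedding $E \hookrightarrow L^2(0,t_N)$ is in particular continuous), hence already weakly lower semicontinuous, which is all the direct method requires; compactness upgrades this to weak continuity along the minimizing sequence, which is more than you need.
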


To prove this we consider the map $\cG: L^2(0,t_N) \rightarrow \mathbb{R}^N$ defined by
\begin{align} \label{eqn:Gdef}
	\mathcal{G}(F) := \left( H q^F(\cdot, t_1), \ldots, H q^F(\cdot, t_N) \right).
\end{align}
Appealing to Proposition \ref{prop:exist}, $\cG$ is defined for all $F \in C[0,t_N]$. Using Proposition \ref{prop:energy} and the fact that $C[0,t_N] \subset L^2(0,t_N)$ is dense, we can extend $\cG$ uniquely to all of $L^2(0,t_N)$.

\begin{proof}[Proof of Proposition \ref{prop:Jmin}] The result follows from Theorem 5.4 in \cite{S10} once we show that the function $\cG$ defined in \eqref{eqn:Gdef} satisfies the following conditions:
\begin{enumerate}
	\item[(i)] for every $\epsilon > 0$ there is an $M = M(\epsilon) \in \mathbb{R}$ such that, for all $F \in L^2(0,t_N)$,
	\begin{align*}
		| \mathcal{G}(F) | \leq \exp \left( \epsilon \|F\|_{L^2(0,t_N)}^2 + M \right);
	\end{align*}
	\item[(ii)] for every $r > 0$ there is a $K = K(r)$ such that, for all $F_1, F_2$ in the closed ball $B_r(0) \subset L^2(0,t_N)$,
	\begin{align*}
		| \mathcal{G}(F_1) - \mathcal{G}(F_2) | \leq K \|F_1 - F_2\|_{L^2(0,t_N)}.
	\end{align*}
\end{enumerate}
Both conditions are immediate consequences of the estimate given in Proposition \ref{prop:energy}.
\end{proof}


\section{The Bayesian interpretation}
\label{sec:bayes}

Proposition \ref{prop:Jmin} guarantees the existence of a unique minimizer for the regularized variational problem. In this section we observe that the regularized problem has a natural Bayesian interpretation in which the minimum of $J$ corresponds to the MAP (maximum \textit{a posteriori}) estimator. In particular, the regularization term in \eqref{cost} corresponds to a Gaussian prior distribution with mean $F_0$ and covariance $\cC_0$. This interpretation makes clear the role played by $F_0$ and $\cC_0$, and can be used to guide our choice of these quantities (based on prior knowledge) to ensure the regularized problem has a meaningful solution.

Since the evolution equation and the observation operator (together represented by the function $\cG$ above) are both linear, if we assume the observational noise is normally distributed, it follows that the family of Gaussian prior distributions is conjugate, in the sense that they necessarily yield Gaussian posteriors. The linear estimation problem in Hilbert spaces is well-studied (see \cite{F70,M84} and references contained therein, and \cite{F91} for a Bayesian perspective). The reader is also advised to consult \cite{S10} for a modern, unified presentation, containing many examples of both linear and nonlinear problems.

The assumption that $F_0 \in L^2(0,t_N)$ and $\cC_0$ is selfadjoint, positive semidefinite and trace class guarantees the existence of a Gaussian measure $\mu_0$ with mean and covariance $F_0$ and $\cC_0$, respectively \cite{R71}. This will be the prior distribution in our Bayesian formulation.

Given the estimates in Proposition \ref{prop:energy} (cf. the proof of Proposition \ref{prop:Jmin} above), the following result is an immediate consequence of Corollary 4.4 in \cite{S10}.

\begin{prop} \label{prop:bayes}
Let $\mu_0$ be a Gaussian measure such that $\mu_0 \left[ L^2(0,t_N) \right] = 1$, and suppose that \eqref{ass:reg}, \eqref{ass:ell} and \eqref{bc:noflux} are satisfied. Given a set of observations $\{(y_i,t_i)\}_{i=1}^N$, where the $i$th observation has normally distributed noise with covariance $r_i^2$, there is a well-defined posterior measure $\mu_y$ on $L^2(0,t_N)$, with Radon--Nikodym derivative
\begin{align}
	\frac{d \mu_y}{d\mu_0}(F) \propto \exp \left\{ - \sum_{i=1}^N r_i^{-2} \left| H q^F(\cdot, t_i) - y_i \right|^2 \right\}.
\end{align}
Moreover, the posterior mean and covariance are continuous with respect to the data $y = (y_1, \ldots, y_N) \in \mathbb{R}^N$.
\end{prop}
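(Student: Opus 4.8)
The plan is to deduce this statement from the general well-posedness theory for Bayesian inverse problems developed in \cite{S10}, so that the real work is to cast our problem in that framework and verify the hypotheses of Corollary 4.4. Writing $\Phi(F;y) := \sum_{i=1}^N r_i^{-2} | Hq^F(\cdot,t_i) - y_i |^2$ for the observational potential (the negative log-likelihood), we have $\Phi(F;y) = | \cG(F) - y |_\Gamma^2$, where $\cG$ is the forward map from \eqref{eqn:Gdef} and $| \cdot |_\Gamma$ denotes the norm on $\mathbb{R}^N$ weighted by the noise covariance $\Gamma = \mathrm{diag}(r_1^2, \dots, r_N^2)$. The posterior is then the measure whose Radon--Nikodym derivative with respect to $\mu_0$ is proportional to $\exp(-\Phi)$, and the content of the proposition is (a) that this prescription defines a genuine probability measure and (b) that its first two moments depend continuously on $y$.

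First I would verify the structural conditions on $\Phi$ required by \cite{S10}. The lower bound is immediate, since $\Phi \geq 0$. The remaining conditions---a local upper bound, local Lipschitz continuity in $F$ on bounded sets, and (Lipschitz) continuity in $y$---all reduce to the growth and Lipschitz estimates (i) and (ii) established in the proof of Proposition \ref{prop:Jmin}, which are themselves consequences of the energy estimate of Proposition \ref{prop:energy}. Concretely, expanding the square and combining the growth bound $| \cG(F) | \leq \exp(\epsilon \|F\|^2 + M)$ with the local bound $| \cG(F_1) - \cG(F_2) | \leq K \|F_1 - F_2\|$ yields local boundedness and local Lipschitz dependence of $\Phi$ on $F$, while the elementary identity for the difference of two squared norms gives continuity in $y$. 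The growth condition (i) is precisely what guarantees that $\exp(-\Phi(\cdot;y))$ is $\mu_0$-integrable, so that the normalizing constant $Z(y) = \int \exp(-\Phi(F;y)) \, d\mu_0(F)$ is finite; strict positivity of $Z(y)$ follows because $\Phi$ is finite on the full-measure set where $\cG$ is defined and $\mu_0$ assigns positive mass to bounded neighborhoods. With these hypotheses in hand, the existence of the posterior $\mu_y$ and the stated form of its Radon--Nikodym derivative are exactly the conclusion of Corollary 4.4 of \cite{S10}.

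For the continuity of the posterior mean and covariance, the key intermediate fact---also supplied by \cite{S10}---is that the map $y \mapsto \mu_y$ is locally Lipschitz continuous with respect to the Hellinger metric on probability measures. I would then upgrade this to convergence of moments: since $\mu_0$ is Gaussian it has finite moments of all orders, and the bounded Radon--Nikodym derivative transfers uniform integrability of $\|F\|$ and $\|F\|^2$ from $\mu_0$ to the family $\{\mu_y\}$ over bounded sets of data. Hellinger convergence together with this uniform integrability yields convergence of the means in $L^2(0,t_N)$ and of the covariance operators in trace norm, which is the asserted continuity.

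The main obstacle I anticipate is not any single estimate but the bookkeeping of translating the abstract assumptions of \cite{S10} into the quadratic potential $\Phi$ and confirming that the finite-dimensional data and diagonal noise covariance here fit the more general setting of that reference; in particular one must track the $y$-dependence carefully, since the growth of $\Phi$ in $y$ controls the Lipschitz constant of $y \mapsto \mu_y$ and hence the moment continuity. Because all the required analytic input is already contained in Proposition \ref{prop:energy}, the argument is genuinely a verification of hypotheses rather than a fresh computation, consistent with the claim that the result is an immediate consequence of the cited corollary.
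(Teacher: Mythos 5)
Your proposal is correct and follows essentially the same route as the paper: the paper's proof consists of observing that the growth and Lipschitz conditions on $\cG$ (conditions (i) and (ii) from the proof of Proposition \ref{prop:Jmin}, both consequences of Proposition \ref{prop:energy}) are exactly the hypotheses of Corollary 4.4 in \cite{S10}, which then delivers the posterior measure and the Hellinger-metric continuity implying continuity of the mean and covariance. Your additional bookkeeping (positivity of the normalizing constant, upgrading Hellinger continuity to moment continuity) simply makes explicit what the cited corollary already packages.
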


The condition $\mu_0 \left[ L^2(0,t_N) \right] = 1$ ensures that the Cameron--Martin space $E = \text{Im}(\cC_o^{1/2})$ is compactly embedded in $L^2(0,t_N)$, so the existence result of Proposition \ref{prop:Jmin} applies.

%
%
%
%
%

A standard choice for the prior is a Gaussian distribution with covariance proportional to the inverse Laplacian $-\Delta^{-1}$. To make sense of this we must impose boundary conditions to ensure that the Laplacian is invertible. Two common choices are:
\begin{enumerate}
	\item Dirichlet boundary conditions, i.e. $\Delta: H^2 \cap H^1_0 \rightarrow L^2$;
	\item periodic boundary conditions, i.e. $\Delta: \mathring{H}^2_{\text{per}} \rightarrow L^2$, where $\mathring{H}^2_{per}$ is the space of periodic functions in $H^2$ with zero mean.
\end{enumerate}
The corresponding Cameron--Martin spaces are $H^1_0$ and $\mathring{H}^1_{\text{per}}$, respectively. In the first example we are imposing the prior assumption that the flux agrees with the prior mean $F_0$ at the endpoints $\{0,t_N\}$, as can easily be seen from the Euler--Lagrange equation for (\ref{cost}). On the other hand, the periodic, zero mean assumption is only justified if all long-term trends have been subtracted from the data, so it would not be appropriate for determining the average flux over a given window of time. 

However, it is important to note that our main formula (Theorem \ref{thm:cov} below) expresses the \emph{difference} between the prior and posterior covariance operators, so for our purposes the particular choice of $\mu_0$ is not important as long as the conditions of Proposition \ref{prop:bayes} are satisfied.


\section{The posterior covariance}
\label{sec:cov}

We denote the posterior covariance operator by $\cC_1$. The main result of this section is an explicit formula for $\cC_1$ in terms of $\cC_0$ and the weighting function $\rho$ in the observation operator defined by \eqref{eqn:obs}.

Letting $Lq = (kq_z)_z - (wq)_z$ (so that the evolution equation \eqref{eqn:evol} is $q_t = Lq$), we define the adjoint operator $L^*$ by $L^*p =  (kp_z)_z + w p_z$. Letting $' = \frac{d}{dz}$, and assuming that $p'(h) = q'(h) = 0$ and (\ref{bc:noflux}) holds, we find that
\begin{align}
	\int_0^h \left[p(Lq) - (L^*p)q \right]dz = k(0) \left[ q(0) p'(0) - p(0) q'(0) \right] 
	\label{eqn:ibp}
\end{align}
for all $p,q$ in the Sobolev space $H^2(0,h)$ (see \cite{E10}).

It is well known (see, for instance, \cite{DS63}) that the Sturm--Liouville problem
\begin{align}
	L^* p + \lambda p = 0, \ \ p'(0) = p'(h) = 0  \label{eqn:SL}
\end{align}
has a sequence of simple eigenvalues $\lambda_0 < \lambda_1 < \cdots$, with corresponding eigenfunctions $\{\rho_n\}$ forming a basis for $L^2(0,h)$. Since $\rho_n'(0) = 0$ but $\rho_n$ is not identically zero, standard uniqueness results imply that $\rho_n(0) \neq 0$. Therefore we can assume that the eigenfunctions have been normalized to have $\rho_n(0) = 1$ for each $n$. It is easy to verify that
\begin{align} \label{eqn:adjoint}
	\int_0^h \rho_m (L^*\rho_n) \mu(z) dz = - \int_0^h k \rho_{m}' \rho_{n}' \mu(z) dz
\end{align}
for all $m,n \geq 0$, where we have set $\mu(z) = \exp \int w(z) / k(z) dz$, hence
\[
	\int_0^h \rho_m(z) \rho_n(z) \mu(z) dz = 0
\]
for $m \neq n$.

The normalization $\rho_n(0) = 1$ is chosen to simplify the computation of the posterior covariance. An undesirable consequence of this choice is that the sequence $\{\rho_n\}$ is not necessarily orthonormal (with respect to either $dz$ or $\mu(z) dz$), so we must exhibit some caution when dealing with series of the form $\sum a_n \rho_n$.

\begin{lemma} \label{lem:summability}
There is a constant $C > 0$ such that
\[
	C^{-1} \leq \int_0^h \rho_n(z)^2\mu(z) dz \leq C
\]
for all $n \geq 0$. Therefore, the series
\[
	\sum_{n=0}^{\infty} a_n \rho_n
\]
converges to a function in $L^2(0,h)$ if and only if $\{a_n\} \in \ell^2$.
\end{lemma}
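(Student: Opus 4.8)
The plan is to translate the claimed two-sided bound into a statement about the boundary values of the $L^2(\mu)$-normalized eigenfunctions, and then read that statement off from the classical oscillatory asymptotics for Sturm--Liouville eigenfunctions. First I would record the self-adjoint structure underlying \eqref{eqn:SL}: a direct computation using $\mu'/\mu = w/k$ gives $\mu\,L^*p = (k\mu p')'$, so \eqref{eqn:SL} is the regular Sturm--Liouville problem $(k\mu p')' + \lambda\mu p = 0$ with Neumann conditions, and $\{\rho_n\}$ is orthogonal in $L^2(\mu) := L^2((0,h),\mu\,dz)$, consistent with \eqref{eqn:adjoint}. Since $k\ge\epsilon>0$ by \eqref{ass:ell} and $w$ is continuous by \eqref{ass:reg}, the weight $\mu=\exp\int w/k$ is continuous and strictly positive on the compact interval $[0,h]$, so $0<\min\mu\le\mu\le\max\mu<\infty$ and the norms of $L^2(\mu)$ and $L^2(0,h)$ are equivalent; it therefore suffices to work in $L^2(\mu)$. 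Writing $\psi_n := \rho_n/\|\rho_n\|_{L^2(\mu)}$ for the orthonormal eigenfunctions and using $\rho_n(0)=1$, one has $\psi_n(0)=1/\|\rho_n\|_{L^2(\mu)}$, hence $\int_0^h\rho_n^2\mu\,dz = \|\rho_n\|_{L^2(\mu)}^2 = \psi_n(0)^{-2}$. The lemma's bound is thus exactly the assertion that $\psi_n(0)$ is bounded above and bounded away from zero, uniformly in $n$.

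Both bounds I would obtain from the Liouville transformation $\xi = \int_0^z k^{-1/2}\,ds$, $v = (k\mu^2)^{1/4}\psi$, which (using $k,w\in C^2$) carries \eqref{eqn:SL} into a problem with constant leading coefficient, $-v'' + Q(\xi)v = \lambda v$ on $[0,\ell]$ with $\ell=\int_0^h k^{-1/2}\,ds$, a continuous potential $Q$, and a Robin condition at $\xi=0$. Classical asymptotics for such problems (see \cite{DS63}) give, uniformly in $z$, $v_n(\xi) = \sqrt{2/\ell}\,\cos(\sqrt{\lambda_n}\,\xi + \delta_n) + O(n^{-1})$ with phase shift $\delta_n\to0$; evaluating at the endpoint and undoing the transformation yields $\psi_n(0)\to (k(0)\mu(0)^2)^{-1/4}\sqrt{2/\ell}$, a positive constant. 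Consequently $\psi_n(0)$ lies in a fixed compact subinterval of $(0,\infty)$ for all large $n$, while each of the finitely many remaining indices satisfies $\psi_n(0)=1/\|\rho_n\|_{L^2(\mu)}>0$; combining the two gives uniform upper and lower bounds, hence the claimed $C$.

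For the final equivalence I would note that $\{\psi_n\}$ is an orthonormal basis of $L^2(\mu)$ (it is orthonormal, and complete because $\{\rho_n\}$ is a basis and the two norms are equivalent). Writing $\sum a_n\rho_n = \sum a_n\|\rho_n\|_{L^2(\mu)}\psi_n$, the partial sums are Cauchy in $L^2(\mu)$ if and only if $\sum a_n^2\|\rho_n\|_{L^2(\mu)}^2<\infty$, which by the two-sided bound holds if and only if $\{a_n\}\in\ell^2$; norm equivalence then transfers the conclusion to $L^2(0,h)$.

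The main obstacle is the lower bound on $\psi_n(0)$, equivalently the upper bound $\int_0^h\rho_n^2\mu\,dz\le C$: it asserts that the Neumann endpoint is asymptotically an \emph{antinode}, so that the oscillating eigenfunction cannot be small there. Naive energy identities are useless for this, since the telescoping relation $\int_0^z\psi_n\psi_n' = \tfrac{1}{2}(\psi_n(z)^2-\psi_n(0)^2)$ collapses to a tautology after weighting by $\mu$ and integrating, while Cauchy--Schwarz only controls $\psi_n(0)$ by $O(\lambda_n^{1/2})$; the relevant information is hidden in the rapid oscillation of $\psi_n$ near $z=0$. This is precisely why the phase of the WKB expansion at the endpoint, rather than any coercive estimate, must be invoked, and the regularity \eqref{ass:reg} is used exactly to make the Liouville transform and the uniform error bound valid.
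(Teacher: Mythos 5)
Your proposal is correct and follows essentially the same route as the paper: both reduce the claim to uniform upper and lower bounds on the endpoint values of the $L^2(\mu)$-normalized eigenfunctions (via $\int_0^h \rho_n^2\mu\,dz = |\tilde{\rho}_n(0)|^{-2}$) and then invoke classical Sturm--Liouville eigenfunction asymptotics. The only difference is that the paper cites Chapter V.11.5 of Courant--Hilbert for these bounds, whereas you unpack that citation explicitly through the Liouville transformation and the endpoint (antinode) asymptotics.
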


\begin{proof} We write $\rho_n(z) = \tilde{\rho}_n(z) / \tilde{\rho}_n(0)$, where $\{\tilde{\rho}_n\}$ are the corresponding eigenfunctions normalized to have
\[
	\int_0^h \tilde{\rho}_n(z)^2 \mu(z) dz = 1.
\]
The result follows from uniform upper and lower bounds on $|\tilde{\rho}_n(0)|$, as can be found in Chapter V.11.5 of \cite{CH53}.
\end{proof}

It is easy to see that $\lambda_0 = 0$ (with $\rho_0 \equiv 1$), hence all of the eigenvalues are nonnegative. Moreover, we have the asymptotic formula (see \cite{CH53})
\begin{align}
	\lambda_n = cn^2 + \mathcal{O}(1)
	\label{eqn:growth}
\end{align}
for some positive constant $c$. This growth estimate will be of key importance in Section \ref{sec:targeting}.

We are now ready to compute the posterior covariance operator. We write the observational weighting function as
\begin{align}
	\rho = \sum_{n=0}^{\infty} a_n \rho_n,
	\label{eqn:kernel}
\end{align}
with $\{\rho_n\}$ as defined above, then define
\begin{align}
	G_i(t) := k(0) r_i^{-1} \left( \sum_{n=0}^{\infty} a_n e^{\lambda_n(t-t_i)} \right) \chi_{[0,t_i]}(t)
	\label{eqn:basis}
\end{align}
for $1 \leq i \leq N$, where $\{t_i\}$ are the observation times and $\chi_{[0,t_i]}$ denotes the characteristic function of the interval $[0,t_i]$.

\begin{lemma} The functions $\{G_i\}$ defined in \eqref{eqn:basis} satisfy $G_i \in L^2(0,t_N)$ for $1 \leq i \leq N$.
\end{lemma}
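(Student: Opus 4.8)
The plan is to reduce the claim to an absolute-convergence estimate in $L^2$ for the exponential series appearing in \eqref{eqn:basis}, exploiting two facts already established: that $\{a_n\} \in \ell^2$ (Lemma \ref{lem:summability}) and that the eigenvalues grow quadratically, $\lambda_n = cn^2 + \mathcal{O}(1)$, by \eqref{eqn:growth}. First I would observe that since $G_i$ is supported on $[0,t_i]$ and $k(0)r_i^{-1}$ is merely a constant, it suffices to show that the function $s \mapsto \sum_{n=0}^\infty a_n e^{-\lambda_n s}$ lies in $L^2(0,t_i)$, where I have substituted $s = t_i - t$ so that $t - t_i = -s$ ranges over $[-t_i,0]$. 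Writing $f_n(s) = e^{-\lambda_n s}$, the strategy is to prove that $\sum_n a_n f_n$ converges \emph{absolutely} in the $L^2(0,t_i)$ norm; completeness then furnishes an $L^2$ limit, and its pointwise sum (which exists for every $s > 0$ because $\lambda_n \to \infty$) agrees with this limit almost everywhere, identifying it with the function in \eqref{eqn:basis}.

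The core computation is a bound on $\|f_n\|_{L^2(0,t_i)}$. The mode $\lambda_0 = 0$ gives $\|f_0\|_{L^2(0,t_i)} = \sqrt{t_i}$, so the $n=0$ term is harmless and I treat it separately. For $n \geq 1$, all $\lambda_n > 0$, and one computes
\begin{align*}
\|f_n\|_{L^2(0,t_i)}^2 = \int_0^{t_i} e^{-2\lambda_n s}\,ds = \frac{1 - e^{-2\lambda_n t_i}}{2\lambda_n} \leq \frac{1}{2\lambda_n},
\end{align*}
so that $\|f_n\|_{L^2(0,t_i)} \leq (2\lambda_n)^{-1/2}$. The triangle inequality applied to the partial sums, followed by Cauchy--Schwarz, then yields
\begin{align*}
\sum_{n=1}^\infty |a_n|\,\|f_n\|_{L^2(0,t_i)} \leq \sum_{n=1}^\infty \frac{|a_n|}{\sqrt{2\lambda_n}} \leq \frac{1}{\sqrt{2}} \left( \sum_{n=1}^\infty |a_n|^2 \right)^{1/2} \left( \sum_{n=1}^\infty \frac{1}{\lambda_n} \right)^{1/2}.
\end{align*}
The first factor is finite by Lemma \ref{lem:summability}, and the second is finite because \eqref{eqn:growth} forces $\lambda_n^{-1} = \mathcal{O}(n^{-2})$, so $\sum_{n \geq 1} \lambda_n^{-1}$ converges. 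Together with the contribution $|a_0|\sqrt{t_i}$ from the zeroth mode, this establishes absolute convergence of $\sum_n a_n f_n$ in $L^2(0,t_i)$, hence $G_i \in L^2(0,t_N)$.

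The step I expect to require the most care is recognizing that no orthogonality of $\{f_n\}$ is available, and organizing the estimate accordingly. The functions $e^{-\lambda_n s}$ are decidedly \emph{not} orthogonal on $(0,t_i)$, so Parseval is unavailable and I cannot simply equate the squared $L^2$ norm with $\sum |a_n|^2 \|f_n\|^2$; the argument must rest entirely on absolute summability via the triangle inequality. The essential inputs making this work are therefore the decay $\|f_n\| = \mathcal{O}(\lambda_n^{-1/2}) = \mathcal{O}(n^{-1})$ and the $\ell^2$ membership of $\{a_n\}$, whose product is summable precisely because $\sum n^{-2} < \infty$. A secondary, but routine, point is the identification of the pointwise and $L^2$ limits: for each fixed $s > 0$ the boundedness of $\{a_n\}$ combined with $e^{-\lambda_n s} \leq e^{-c'n^2 s}$ gives pointwise absolute convergence, and passing to an a.e.-convergent subsequence of partial sums identifies the two limits.
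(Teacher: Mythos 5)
Your proof is correct, but it takes a genuinely different route from the paper's. The paper argues pointwise in $t$: fixing $t < t_i$, it applies Cauchy--Schwarz across the index $n$ to get $|G_i(t)| \leq \|a\|_{\ell^2}\, \bigl( \sum_n e^{2\lambda_n(t-t_i)} \bigr)^{1/2}$, and the quadratic growth \eqref{eqn:growth} makes this Gaussian-type sum of order $(t_i-t)^{-1/2}$, so that $|G_i(t)|^2 \leq A'/\sqrt{t_i-t}$; integrability of $(t_i-t)^{-1/2}$ over $(0,t_i)$ then finishes the proof. You instead work at the level of norms: you bound $\|e^{-\lambda_n(\cdot)}\|_{L^2(0,t_i)} \leq (2\lambda_n)^{-1/2}$ for $n \geq 1$, and apply the triangle inequality plus Cauchy--Schwarz to the sum $\sum_n |a_n|\,\|e^{-\lambda_n(\cdot)}\|_{L^2}$, using $\{a_n\} \in \ell^2$ and $\sum_{n \geq 1} \lambda_n^{-1} < \infty$. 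Both arguments consume the same two inputs (quadratic eigenvalue growth and square-summability of the coefficients), but they deliver slightly different dividends. The paper's pointwise envelope is more quantitative: it exhibits the blow-up rate of $G_i$ as $t \to t_i^-$, which is stronger information than mere membership in $L^2$. Your norm-absolute-convergence argument is cleaner on a point the paper leaves implicit: it shows the partial sums converge \emph{in} $L^2(0,t_i)$ and explicitly identifies the $L^2$ limit with the pointwise-defined function in \eqref{eqn:basis} via an a.e.-convergent subsequence, and it also isolates the $\lambda_0 = 0$ mode, which genuinely needs separate treatment in your setup (the paper's envelope absorbs it automatically, since $1 \leq \sqrt{t_i}/\sqrt{t_i - t}$ on $(0,t_i)$). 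Your observation that no orthogonality of the exponentials is available, so that Parseval cannot be invoked and everything must run through absolute summability, is exactly the right caution and is consistent with the paper's warning about the non-orthonormal normalization $\rho_n(0) = 1$.
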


\begin{proof} Since $\rho \in L^2(0,h)$, it follows from Lemma \ref{lem:summability} that $\{a_n\} \in \ell^2$. Using \eqref{eqn:growth} we find that $\{e^{\lambda_n(t-t_i)}\}_{n=0}^{\infty} \in \ell^2$ for $t < t_i$, with
\[
	\left\| e^{\lambda_n(t-t_i)} \right\|^2_{\ell^2} \leq \frac{A}{\sqrt{t_i-t}}
\]
for some $A > 0$ independent of $n$ and $t$. The Cauchy--Schwarz inequality implies
\[
	|G_i(t)|^2 \leq \frac{A'}{\sqrt{t_i-t}}
\]
for some $A' > 0$, and the result follows.
\end{proof}

We denote by $D(\mathcal{C}_0) \subset L^2(0,t_N)$ the domain of the prior covariance operator. The main result of this section is the following.

\begin{theorem} \label{thm:cov}
Suppose \eqref{ass:reg}, \eqref{ass:ell} and \eqref{bc:noflux} are satisfied. Then the posterior covariance is given by
\begin{align}
	\mathcal{C}_1^{-1} G &= \mathcal{C}_0^{-1} G + \sum_{i=1}^N \left( \int_0^{t_N} G(s) G_i(s) ds \right) G_i
	\label{eqn:cov}
\end{align}
for all $G \in D(\mathcal{C}_0)$.
\end{theorem}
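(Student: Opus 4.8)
The plan is to identify the posterior precision $\cC_1^{-1}$ with the Hessian of the cost functional $J$ in \eqref{cost} and then compute that Hessian explicitly. Writing $\cG_i(F) := Hq^F(\cdot,t_i)$ for the $i$th component of the map $\cG$ in \eqref{eqn:Gdef}, the map $F \mapsto q^F$ is affine (linear in $F$ plus the contribution of the initial data $q_0$), so each $\cG_i$ is a continuous affine functional on $L^2(0,t_N)$, whose linear part is represented, via Riesz, by some $g_i \in L^2(0,t_N)$. Since the data-misfit term of $J$ is quadratic in the $\cG_i$ and the regularization term contributes $\cC_0^{-1}$, the second Fr\'echet derivative of $J$ is $\cC_0^{-1} + \sum_i r_i^{-2}\, g_i \otimes g_i$, and the standard linear--Gaussian theory (Proposition \ref{prop:bayes}, together with the identification of the MAP Hessian with the posterior precision from \cite{S10}) gives $\cC_1^{-1} = \cC_0^{-1} + \sum_i r_i^{-2}\, g_i\otimes g_i$. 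Setting $G_i := r_i^{-1} g_i$ turns this into \eqref{eqn:cov}, so the entire theorem reduces to computing the representers $g_i$ and checking that $r_i^{-1} g_i$ equals \eqref{eqn:basis}. Note that the affine constant coming from $q_0$ plays no role, as it is annihilated by differentiation.

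The heart of the matter is thus a duality (adjoint-state) computation for $g_i$. I would fix $i$ and introduce the backward adjoint problem $p_t + L^* p = 0$ on $(0,h)\times(0,t_i)$ with Neumann conditions $p_z(0,t) = p_z(h,t) = 0$ and terminal data $p(\cdot,t_i) = \rho$. Pairing $p$ with $q = q^F$ and differentiating, the integration-by-parts identity \eqref{eqn:ibp} gives $\frac{d}{dt}\int_0^h pq\,dz = \int_0^h (p_t + L^*p)q\,dz + k(0)\big[q(0)p'(0) - p(0)q'(0)\big]$, which applies because $p_z(h,\cdot) = q_z(h,\cdot) = 0$ and \eqref{bc:noflux} holds. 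The choice $p_t + L^* p = 0$ and $p'(0,\cdot) = 0$ collapses the right-hand side to $-k(0)p(0,t)q'(0,t) = k(0)p(0,t)F(t)$, using the flux boundary condition \eqref{bc:flux}. Integrating over $[0,t_i]$ and using $p(\cdot,t_i) = \rho$ yields $Hq^F(\cdot,t_i) = \int_0^h p(\cdot,0)\,q_0\,dz + k(0)\int_0^{t_i} p(0,t)F(t)\,dt$; the first term is the $F$-independent constant, and the second exhibits the linear part of $\cG_i$ with representer $g_i(t) = k(0)\,p(0,t)\,\chi_{[0,t_i]}(t)$.

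It remains to make $p(0,t)$ explicit. Expanding $\rho = \sum_n a_n \rho_n$ as in \eqref{eqn:kernel} and seeking $p(z,t) = \sum_n c_n(t)\rho_n(z)$, the relation $L^*\rho_n = -\lambda_n\rho_n$ from \eqref{eqn:SL} reduces $p_t + L^* p = 0$ to $c_n' = \lambda_n c_n$, so with terminal data $c_n(t_i) = a_n$ we get $c_n(t) = a_n e^{\lambda_n(t - t_i)}$. Since $\rho_n(0) = 1$, this gives $p(0,t) = \sum_n a_n e^{\lambda_n(t - t_i)}$, whence $g_i = r_i\, G_i$ with $G_i$ as in \eqref{eqn:basis}. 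In reversed time $\tau = t_i - t$ the adjoint problem is the forward parabolic equation $p_\tau = L^* p$, which is well-posed and smoothing; because the eigenvalues satisfy $\lambda_n \geq 0$ with the growth \eqref{eqn:growth}, the series for $p(0,t)$ converges for $t < t_i$, consistent with the preceding lemma placing $G_i \in L^2(0,t_N)$. The main obstacle I anticipate is not any single calculation but the careful bookkeeping of the boundary terms in the duality identity and the justification that the formal eigenfunction expansion of $p$ (a solution of an a priori delicate backward evolution) genuinely represents the adjoint state and may be paired termwise against $F$; the summability bounds of Lemma \ref{lem:summability} and the estimate \eqref{eqn:growth} are precisely what is needed to control these manipulations.
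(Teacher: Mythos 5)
Your proposal is correct and is essentially the paper's own argument: both identify the posterior precision $\mathcal{C}_1^{-1}$ with the Hessian of the cost functional $J$ via the linear--Gaussian structure, and both obtain the kernels $G_i$ of \eqref{eqn:basis} by expanding $\rho$ in the adjoint Sturm--Liouville eigenfunctions and exploiting the boundary identity \eqref{eqn:ibp} together with the normalization $\rho_n(0)=1$. The only differences are organizational: you package the eigenfunction computation as a backward adjoint-state/duality argument and read off the Riesz representers of the observation functionals (hence \eqref{eqn:cov} directly), whereas the paper computes the second variation along the linearized solution $\mathring{q}^G$, solves the resulting mode-by-mode ODEs for $H_n(t)$ with an integrating factor, and passes from the quadratic-form identity \eqref{eqn:covL2} to \eqref{eqn:cov} by polarization.
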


We will often find it convenient to use an integral version of this formula,
\begin{align}
	\| \mathcal{C}_1^{-1/2} G \|_{L^2(0,t_N)}^2 = \| \mathcal{C}_0^{-1/2} G \|_{L^2(0,t_N)}^2 + \sum_{i=1}^N \left( \int_0^{t_N} G(t) G_i(t) dt \right)^2,
	\label{eqn:covL2}
\end{align}
which shows that the information gain from the $i$th observation is maximized in the direction spanned by $G_i$, and the prior and posterior covariance operators agree on the codimension one subspace orthogonal to $G_i$. While it is \textit{a priori} clear that a single scalar observation will only yield new information in one dimension, the importance of \eqref{eqn:covL2} is that it allows us to compute this direction explicitly in terms of the weight $\rho$. Applications of this formula to the carbon flux problem are given in Sections \ref{sec:altitude} and \ref{sec:targeting}.


We start the proof by computing the second variation of the cost functional \eqref{cost}, and relating it to the posterior covariance operator. We recall that $E = \text{Im}(\cC_0^{1/2})$ denotes the Cameron--Martin space associated to the prior covariance operator, which is compactly embedded in $L^2(0,t_N)$.

\begin{lemma} Let $G \in E$. Then
\begin{align}
	\left\| \mathcal{C}_1^{-1/2} G \right\|_{L^2(0,t_N)} = \sum_{i=1}^N r_i^{-2} \left| H \mathring{q}^G(\cdot, t_i) \right|^2 + \left\| \mathcal{C}_0^{-1/2} G \right\|_{L^2(0,t_N)},
		\label{cov}
\end{align}
where $\mathring{q}^G$ denotes the unique solution to (\ref{eqn:evol}) with boundary conditions
\[
	\mathring{q}^G_z(0,t) = -G(t), \ \ \mathring{q}^G_z(h,t) =0
\]
and initial condition
\[
	\mathring{q}^G(z,0) = 0.
\]
\end{lemma}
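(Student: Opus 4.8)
The plan is to identify the quadratic form $G \mapsto \|\mathcal{C}_1^{-1/2}G\|^2$ with the second variation of the cost functional $J$ from \eqref{cost}, and then to evaluate that second variation explicitly. The conceptual input is the Bayesian interpretation of Proposition \ref{prop:bayes}: since the forward map $F \mapsto q^F$ is affine and the observational noise is Gaussian, the posterior $\mu_y$ is itself Gaussian, with mean equal to the minimizer $\bar{F}$ of $J$ and covariance $\mathcal{C}_1$. Its density is therefore proportional both to $\exp\bigl(-\tfrac12\|\mathcal{C}_1^{-1/2}(F-\bar{F})\|^2\bigr)$ and to $\exp(-J(F))$ (up to an additive constant in the exponent, and with the convention that $J$ in \eqref{cost} is the negative log posterior), so these two quadratic functionals of $F$ share the same Hessian. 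Because a quadratic functional has a constant second variation, it suffices to compute the Hessian of each by a single Taylor expansion; the left-hand side of the asserted identity is exactly the second variation of $\tfrac12\|\mathcal{C}_1^{-1/2}(F-\bar{F})\|^2$ in the direction $G$.

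To compute the second variation of $J$, first I would exploit the linearity of the transport problem. For fluxes $F_1,F_2$ sharing the initial datum $q_0$, the difference $q^{F_1}-q^{F_2}$ solves \eqref{eqn:evol} with boundary flux $F_1-F_2$ at $z=0$, vanishing flux at $z=h$, and zero initial condition; by the uniqueness in Proposition \ref{prop:exist} this difference is precisely $\mathring{q}^{F_1-F_2}$, and $G \mapsto \mathring{q}^G$ is linear. Taking $F_1 = \bar{F}+\epsilon G$ and $F_2 = \bar{F}$ gives $q^{\bar{F}+\epsilon G} = q^{\bar{F}} + \epsilon\,\mathring{q}^G$, and hence $Hq^{\bar{F}+\epsilon G}(\cdot,t_i) - y_i = \bigl(Hq^{\bar{F}}(\cdot,t_i)-y_i\bigr) + \epsilon\,H\mathring{q}^G(\cdot,t_i)$. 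Substituting into \eqref{cost} and collecting the second-order terms from the misfit and the regularization separately yields
\begin{align*}
	\frac{d^2}{d\epsilon^2}\,J(\bar{F}+\epsilon G)\Big|_{\epsilon=0} = \sum_{i=1}^N r_i^{-2}\,\bigl|H\mathring{q}^G(\cdot,t_i)\bigr|^2 + \bigl\|\mathcal{C}_0^{-1/2}G\bigr\|_{L^2(0,t_N)}^2,
\end{align*}
which is the right-hand side of the claim; equating the two Hessians then completes the argument.

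Two technical points must be handled along the way. First, the statement concerns $G$ in the Cameron--Martin space $E = \operatorname{Im}(\mathcal{C}_0^{1/2})$, which need not consist of continuous functions, so $\mathring{q}^G$ and the scalars $H\mathring{q}^G(\cdot,t_i)$ must be defined for such $G$ by the density-and-extension procedure used to extend $\cG$ to $L^2(0,t_N)$, the requisite continuity being furnished by the energy estimate of Proposition \ref{prop:energy} exactly as in the proof of Proposition \ref{prop:Jmin}. Second, one must verify that the two quadratic forms share a common domain: the map $G \mapsto \sum_i r_i^{-2}|H\mathring{q}^G(\cdot,t_i)|^2$ is a bounded, finite-rank form on $L^2(0,t_N)$ (each $G \mapsto H\mathring{q}^G(\cdot,t_i)$ is a bounded linear functional, again by Proposition \ref{prop:energy}), so it is a benign perturbation of $\|\mathcal{C}_0^{-1/2}G\|^2$ and $D(\mathcal{C}_1^{-1/2}) = D(\mathcal{C}_0^{-1/2}) = E$. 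I expect the main obstacle to be not the formal expansion but this rigorous identification of $\mathcal{C}_1^{-1}$ with the Hessian of $J$ in the infinite-dimensional setting---justifying that the Gaussian posterior precision really is the second variation of the negative log density on all of $E$, rather than merely on a dense subspace---for which the finite-rank nature of the data term and the compact embedding $E \hookrightarrow L^2(0,t_N)$ are the essential ingredients.
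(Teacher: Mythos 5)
Your proposal is correct and follows essentially the same route as the paper: identify $\mathcal{C}_1^{-1}$ with the Hessian of $J$ via Gaussianity of the posterior (mean $\bar{F}$, covariance $\mathcal{C}_1$), compute that Hessian by linearizing the forward map $F \mapsto q^F$ to get $\mathring{q}^G$, and equate the two quadratic expansions about $\bar{F}$. Your added care about extending $\mathring{q}^G$ to $G \in E$ by density and about the common domain of the two quadratic forms (and your implicit correction of the missing squares on the norms in the stated identity) only sharpens what the paper does implicitly.
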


\begin{proof}
Viewing the solution to the transport equation given in Proposition \ref{prop:exist} as a map $F \mapsto q^F$, the linearization is given by
\begin{align*}
	\left. \frac{d}{ds} \right|_{s=0} q^{F+sG}
	= \mathring{q}^G,
\end{align*}
so we obtain
\begin{align*}
	D^2J(F)(G,G) =& \left. \frac{d^2}{ds^2} \right|_{s=0} J(F+sG) \\
	=& \sum_{i=1}^N r_i^{-2} \left| H \mathring{q}^G(\cdot, t_i) \right|^2 + \left\| \mathcal{C}_0^{-1/2} G \right\|_{L^2(0,t_N)}
\end{align*}
for the Hessian of $J$.

Since the forward problem is linear, the cost functional is quadratic and has a unique minimizer, which we denote by $\bar{F}$. Then Taylor's theorem implies
\begin{align*}
	J(\bar{F}+sG) = J(\bar{F}) + \frac{s^2}{2} D^2 J(\bar{F})(G,G)
\end{align*}
for any $G$. We also have that the posterior measure is Gaussian (cf. Theorem 6.20 and Example 6.23 in \cite{S10}, and also Section 5 of \cite{ALS13}) hence
\begin{align*}
	J(F) = J(\bar{F}) + \frac{1}{2} \left< \mathcal{C}_1^{-1/2} (F - \bar{F}),  \mathcal{C}_1^{-1/2} (F - \bar{F}) \right> _{L^2(0,T)}.
\end{align*}
Setting $F = \bar{F} + sG$ and equating the above expressions, we find
\begin{align}
	\left\| \mathcal{C}_1^{-1/2} G \right\| _{L^2(0,t_N)} = D^2J(\bar{F})(G,G),
\end{align}
which completes the proof.
\end{proof}

We use the spectral decomposition of the observational kernel, as prescribed in (\ref{eqn:kernel}), to complete the proof of Theorem \ref{thm:cov}.

\begin{proof}[Proof of Theorem \ref{thm:cov}] For the $i$th observation we have
\begin{align*}
	H \mathring{q}^G(\cdot, t_i) = \sum_{n=0}^{\infty} a_n \int_0^h \rho_n(z) \mathring{q}^G(z,t_i) dz.
\end{align*}
We focus on the $n$th summand, which we write as $a_n H_n(t_i)$ with $H_n$ defined by
\begin{align*}
	H_n(t) = \int_0^h \rho_n(z) \mathring{q}^G(z,t) dz.
\end{align*}
Differentiating and then applying (\ref{eqn:ibp}), we have
\begin{align*}
	\frac{d}{dt} H_n(t) &= \int_0^h \rho_n(z) \frac{\partial \mathring{q}^G(z,t)}{\partial t} dz \\
	&= k(0) G(t) - \lambda_n H_n(t),
\end{align*}
because $\mathring{q}^G_t = L\mathring{q}^G$, $L^* \rho_n = \lambda_n \rho_n$, and $\rho_n(0) = 1$. Using an integrating factor, we find
\begin{align*}
	H_n(t) = k(0) e^{-\lambda_n t} \int_0^t G(s) e^{\lambda_n s} ds.
\end{align*}
Summing over $n$, it follows that
\begin{align*}
	H\mathring{q}^G(\cdot, t_i) = k(0) \sum_{n=0}^{\infty} a_n e^{-\lambda_n t_i} \int_0^{t_i} G(s) e^{\lambda_n s} ds
\end{align*}
for each $i$. Squaring both sides and summing over $i$, we obtain (\ref{eqn:covL2}). Finally, (\ref{eqn:cov}) follows from (\ref{eqn:covL2}) via a standard polarization argument.
\end{proof}


\section{High vs. low altitude observations}
\label{sec:altitude}
Our first application of Theorem \ref{thm:cov} is to compare observational weighting functions concentrated near the surface of the Earth with weights concentrated near the top of the atmosphere. We start with a simple example, letting
\begin{align}
	\rho_{\pm}(z) = \rho_0(z) \pm \rho_1(z),
\end{align}
where $\rho_0$ and $\rho_1$ are the first two adjoint eigenfunctions defined in (\ref{eqn:SL}) with the normalization $\rho_0(0) = \rho_1(0) = 1.$ In the case that $w=0$ and $k=1$, this is just
\begin{align*}
	\rho_{\pm}(z) = 1 \pm \cos (\pi z /h),
\end{align*}
so we will think of $\rho_+$ and $\rho_-$ as being concentrated near the surface of the earth and the top of the atmosphere, respectively.

We assume a single observation, at time $T>0$. According to \eqref{eqn:covL2}, information is gained in the directions spanned by
\begin{align*}
	G_{\pm}(t) = 1 \pm e^{\lambda_1(t-T)}.
\end{align*}
Note that $G_+$ and $G_-$ are strictly increasing and decreasing, respectively. Thus an observation made by $\rho_+$ will yield a higher gain in information than $\rho_-$ on the subspace of fluxes that are increasing in magnitude (with respect to time). It turns out this monotonicity property holds more generally.

\begin{theorem} \label{thm:monotone}
Let $\rho \in C^1[0,h]$ be increasing (decreasing) with respect to $z$. For each $1 \leq i \leq N$, the function $G_i \in L^2(0,t_N)$ defined in (\ref{eqn:basis}) is decreasing (increasing) with respect to $t$.
\end{theorem}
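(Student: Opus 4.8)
The plan is to realize $G_i$, up to positive constants, as the boundary trace of a solution to an adjoint parabolic equation, and then read off its monotonicity from the maximum principle. Write $p(z,t) := \sum_{n=0}^\infty a_n e^{\lambda_n(t-t_i)}\rho_n(z)$, so that by \eqref{eqn:basis} and the normalization $\rho_n(0)=1$ we have $G_i(t) = k(0) r_i^{-1}\, p(0,t)$ for $t\in[0,t_i]$. Since $k(0)r_i^{-1}>0$, the sign of $\frac{d}{dt}G_i$ on $[0,t_i]$ equals the sign of $\partial_t p(0,t)$, so it suffices to show $\partial_t p(0,t)\le 0$ when $\rho$ is increasing (and $\ge 0$ when $\rho$ is decreasing).

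Next I would identify the equation solved by $p$. Because each $\rho_n$ satisfies $L^*\rho_n=-\lambda_n\rho_n$ and $\rho_n'(0)=\rho_n'(h)=0$ by \eqref{eqn:SL}, term-by-term differentiation gives $p_t=-L^*p$ with terminal data $p(\cdot,t_i)=\rho$ and Neumann conditions $p_z(0,t)=p_z(h,t)=0$. Evaluating at $z=0$ and using $p_z(0,t)=0$ together with $w(0)=0$ from \eqref{bc:noflux}, the advective and first-order terms of $L^*p=(kp_z)_z+wp_z$ drop out, leaving
\begin{align*}
	\partial_t p(0,t) = -(L^*p)(0,t) = -k(0)\, p_{zz}(0,t).
\end{align*}
Thus the theorem reduces to the claim that $p_{zz}(0,t)\ge 0$ whenever $\rho$ is increasing. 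To access this via a well-posed problem I would reverse time, setting $\tau=t_i-t$ and $\tilde p(z,\tau)=p(z,t_i-\tau)$, so that $\tilde p_\tau=L^*\tilde p$ is a genuine advection--diffusion equation with initial datum $\tilde p(\cdot,0)=\rho$, Neumann boundary conditions, and parabolic smoothing for $\tau>0$.

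The core step is a maximum principle applied to $v:=\tilde p_z$. Differentiating $\tilde p_\tau=k\tilde p_{zz}+(k_z+w)\tilde p_z$ in $z$ shows that $v$ solves a linear parabolic equation $v_\tau=k v_{zz}+(2k_z+w)v_z+(k_{zz}+w_z)v$, with continuous coefficients by \eqref{ass:reg}, homogeneous Dirichlet data $v(0,\tau)=v(h,\tau)=0$ inherited from the Neumann conditions on $\tilde p$, and initial datum $v(\cdot,0)=\rho'\ge 0$. The parabolic maximum principle then yields $v\ge 0$ throughout. Since $v(0,\tau)=0$ is therefore a minimum of $v(\cdot,\tau)$ over $[0,h]$, the one-sided derivative satisfies $v_z(0,\tau)=\tilde p_{zz}(0,\tau)\ge 0$, i.e. $p_{zz}(0,t)\ge 0$, which gives $\frac{d}{dt}G_i\le 0$ as desired; the decreasing case is identical with all inequalities reversed.

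The step I expect to require the most care is the regularity needed to run this argument rigorously. The series defining $p$ converges rapidly for $t<t_i$ because $e^{\lambda_n(t-t_i)}$ decays like $e^{-cn^2(t_i-t)}$ by \eqref{eqn:growth}, which legitimizes the term-by-term differentiations and makes $\tilde p$ smooth for $\tau>0$; hence the computation of $\partial_t p(0,t)$ and the equation for $v$ are valid on the open interval, and the monotonicity of the continuous function $G_i$ extends to $[0,t_i]$ by continuity. The only genuine subtlety is that, for a general increasing $\rho\in C^1[0,h]$, the datum $\rho'$ need not vanish at the endpoints, so $v$ has an initial--boundary incompatibility at the corners. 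I would handle this by applying the maximum principle on $[0,h]\times[\delta,\tau]$ and letting $\delta\to 0^+$, using that $v\to\rho'\ge 0$ as $\tau\to 0^+$, so that nonnegativity on the parabolic boundary still forces $v\ge 0$ in the interior.
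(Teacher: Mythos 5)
Your proposal is correct and follows essentially the same route as the paper: both identify $G_i(t)$ (up to a positive constant) with the boundary trace $p(0,t)$ of the backward adjoint solution, time-reverse, apply the parabolic maximum principle to the spatial derivative $v=\tilde p_z$ (the paper's $s$, whose divergence-form equation $s_\tau=(ks)_{zz}+(ws)_z$ is exactly your nondivergence equation) with Dirichlet data and nonnegative initial datum $\rho'$, and then conclude $p_{zz}(0,t)\ge 0$ hence $\partial_t p(0,t)=-k(0)p_{zz}(0,t)\le 0$. Your additional attention to the series regularity and the corner incompatibility of $\rho'$ at the endpoints is a minor refinement of details the paper leaves implicit, not a different argument.
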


\begin{proof} It is clear that the function
\begin{align*}
	p_i(z,t) := \sum_{n=0}^{\infty} a_n e^{\lambda_n(t-t_i)} \rho_n(z)
\end{align*}
satisfies the adjoint equation $p_{it} + L^* p_i = 0$ on $[0,h] \times (-\infty,t_i)$, with Neumann boundary conditions. Therefore
\begin{align*}
	s(z,t) := p_{iz}(z,t_i-t)
\end{align*}
satisfies the equation
\begin{align*}
	s_t = (ks)_{zz} + (ws)_z
\end{align*}
on $[0,h] \times (0,\infty)$, with Dirichlet boundary conditions. By construction we have $s(z,0) \geq 0$ and $s(0,t) = s(h,t) = 0$,  so the maximum principle (e.g. Lemma 5 of \cite{F64}) implies $s(z,t) \geq 0$ for all $(z,t) \in [0,h] \times [0,\infty)$. It follows that $s_z(0,t) \geq 0$ for $t \geq 0$.

We thus have $p_{iz}(0,t) = 0$ and $p_{izz}(0,t) \geq 0$ for all $t \leq t_i$. Evaluating the adjoint equation at $z=0$, we obtain
\begin{align*}
	p_{it}(0,t) = -k(0) p_{izz}(0,t) \leq 0.
\end{align*}
Recalling the normalization $\rho_n(0) = 1$, we see that $p_i(0,t) \propto G_i(t)$, with a positive constant of proportionality, and the result follows.
\end{proof}

While the above result demonstrates a strong connection between the geometry of $\rho(z)$ and the resulting posterior covariance (via $G_i$), many questions of course remain. For instance, while it was shown that $\rho_+$ ($\rho_-$) gives more information on the subspaces of fluxes that are increasing (decreasing) with respect to time, it is not immediately clear which is better suited to determining the \emph{mean} flux over the observational time window.

From \eqref{eqn:covL2} we see that determining which of $\rho_{\pm}$ yields more information about the mean flux on $[0,T]$ is equivalent to determining which of the functions $G_{\pm}$ has a larger (in absolute value) orthogonal projection onto the space of constant functions. We integrate to find
\begin{align*}
	\int_0^T G_{\pm}(t) dt = T \pm \frac{1 - e^{-\lambda_1 T}}{\lambda_1}.
\end{align*}
A simple convexity estimate yields $1 - e^{-\lambda_1 T} \leq \lambda_1 T$. This implies both of the integrals above are positive, and so
\begin{align*}
	\left| \int_0^T G_+(t) dt \right| - \left| \int_0^T G_-(t) dt \right| &= 2\frac{1 - e^{-\lambda_1 T}}{\lambda_1},
\end{align*}
which is clearly positive.

This example suggests that for the purpose of determining the \emph{mean} flux, it is advisable to choose a weighting function that is concentrated near the surface of the Earth. It also illustrates that the comparison of weights is a rather subtle business, as there is no universal notion of a ``best" observation operator.


\section{Targeting and unobservable dimensions}
\label{sec:targeting}
Our second application is to observational targeting: given a ``direction of interest" $\widehat{G}$ and a time $t_i$, does there exist a weighting function $\rho$ such that the corresponding $G_i$ is proportional to $\widehat{G}$? In other words, can one construct an observation operator of the form \eqref{eqn:obs} that yields maximal information gain in a given direction $\widehat{G}$?

According to Theorem \ref{thm:cov}, the existence of a weight $\rho$ with the desired properties is equivalent to the existence of a sequence $\{a_n\} \in \ell^2$ such that
\[
	\widehat{G}(t) = \sum_{n=0}^{\infty} a_n e^{\lambda_n(t-t_i)}
\]
in $L^2(0,t_i)$. There are several nontrivial technical details to be resolved in carrying out this program. First, one needs to determine for which $\widehat{G}$ such a sequence exists in $\ell^2$. Second, one needs to ensure the resulting weight $\rho = \sum a_n \rho_n$ is nonnegative. Third is the much more difficult question of which weighting functions can be physically realized by existing instrumentation.

This is closely related to the problem of determining the subspace of $L^2(0,t_i)$ spanned by the functions $\{ \exp(\lambda_n (t-t_i)) \}$, where $\{\lambda_n\}$ are the Sturm--Liouville eigenvalues from the adjoint problem (\ref{eqn:SL}). Such non-orthogonal series expansions arise in controllability problems for the heat equation (cf. \cite{FR71}), and there is extensive literature on their completeness properties (see e.g. \cite{PW33b,R68,S43}). 

We simplify the situation by just considering a single observation, at time $T > 0$, and asking: given $\widehat{G} \in L^2(0,T)$, does there exist a nonnegative weight $\rho = \sum a_n \rho_n$ in $L^2(0,h)$ such that
\[
	G_T(t) = k(0) r_T^{-1} \sum_{n=0}^{\infty} a_n e^{\lambda_n(t-T)}
\]
has a nontrivial projection onto $\widehat{G}$? In other words, we are requiring only that there is \emph{some} decrease in covariance along the given direction $\widehat{G}$, and allow for the possibility that the decrease may be greater elsewhere. We find that even this more modest demand cannot always be satisfied.

\begin{theorem} \label{thm:unobservable}
There exists $G \in L^2(0,T)$ such that $\mathcal{C}_1 G = \mathcal{C}_0 G$ for any choice of $\rho$.
\end{theorem}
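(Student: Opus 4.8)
My plan is to specialize the covariance formula \eqref{eqn:cov} to the single observation at time $T$ and reduce the claim to an incompleteness statement for a system of exponentials. For a single observation, \eqref{eqn:cov} reads $\cC_1^{-1}G = \cC_0^{-1}G + \langle G,G_T\rangle_{L^2(0,T)}\,G_T$, a rank-one perturbation of the prior precision operator. Applying the Sherman--Morrison identity to pass from precisions to covariances gives $\cC_1 = \cC_0 - (1+\langle G_T,\cC_0 G_T\rangle)^{-1}\,(\cC_0 G_T)\otimes(\cC_0 G_T)$, where $(u\otimes u)G = \langle G,u\rangle u$, so that $\cC_1 G = \cC_0 G$ if and only if $\langle \cC_0 G, G_T\rangle = 0$. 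Recalling from \eqref{eqn:basis} that every admissible $G_T$ lies in the closed linear span of $\{e^{\lambda_n(\cdot - T)}\}_{n\ge 0}$ in $L^2(0,T)$, it therefore suffices to produce a nonzero $G$ with $\cC_0 G$ orthogonal to $e^{\lambda_n(\cdot-T)}$ for every $n$; by continuity such a $G$ is then unobservable for every choice of weight $\rho$.

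The heart of the argument is to show that $\{e^{\lambda_n(\cdot-T)}\}_{n\ge0}$ is not complete in $L^2(0,T)$, so that its orthogonal complement $V$ is nontrivial. I would argue this through the Laplace transform: a function $\psi\in L^2(0,T)$ annihilates the family precisely when the entire function $\Phi(\lambda) = \int_0^T \psi(t)e^{\lambda t}\,dt$ vanishes at every $\lambda_n$. By the Paley--Wiener theorem $\Phi$ ranges exactly over the entire functions of exponential type at most $T$ that are square-integrable on the imaginary axis, and the growth law \eqref{eqn:growth} gives $\lambda_n = cn^2 + \mathcal O(1)$, whence $\sum_{n\ge1}\lambda_n^{-1} < \infty$. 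Because the prescribed zero set is this sparse---its counting function grows like $\sqrt{r}$, far below the linear density admissible for type-$T$ functions---one can realize such a nonvanishing $\Phi$, and indeed infinitely many linearly independent ones. Equivalently, this is the classical M\"untz--Szász completeness criterion for real exponential systems, which fails exactly when $\sum\lambda_n^{-1}<\infty$ (see \cite{S43,PW33b,R68}). Hence $V$ is in fact infinite-dimensional.

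The remaining technical point---and the step I expect to be the main obstacle---is that I need the annihilator to lie not merely in $L^2(0,T)$ but in the range of $\cC_0$, since the vector identity $\cC_1 G = \cC_0 G$ requires $\cC_0 G\in V$ with $G\ne0$ (as opposed to the weaker agreement of the quadratic forms on $V$, which follows immediately from \eqref{eqn:covL2}). Infinite-dimensionality of $V$ alone does not force $V\cap\mathrm{Im}(\cC_0)\neq\{0\}$, so here I must exploit the analytic structure: decay of $\Phi$ along the imaginary axis controls the Sobolev regularity of $\psi$, and the large density gap leaves enough room to choose $\Phi$ decaying fast enough---by tempering a canonical product over $\{\lambda_n\}$ with a suitable Paley--Wiener factor, or directly through the interpolation/closure theory for sparse real-exponential systems of \cite{S43}---that $\psi$ is as smooth as needed and satisfies the boundary conditions defining $\mathrm{Im}(\cC_0)=D(\cC_0^{-1})$. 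Granting such a $\psi\in V\cap\mathrm{Im}(\cC_0)$, and using that $\cC_0$ is injective for the admissible priors of Section \ref{sec:bayes}, I set $G = \cC_0^{-1}\psi$; then $G\neq 0$, $\cC_0 G = \psi\in V$, and the reduction of the first paragraph yields $\cC_1 G = \cC_0 G$ for every admissible weight $\rho$, which is the assertion.
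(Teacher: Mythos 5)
Your proposal is correct, and its analytic core coincides with the paper's: the growth law \eqref{eqn:growth} gives $\sum_n (1+|\lambda_n|)^{-1} < \infty$, and the classical M\"untz--Sz\'asz/Paley--Wiener incompleteness results for real exponential systems then produce a nonzero annihilator of $\{e^{\lambda_n(\cdot-T)}\}$ in $L^2(0,T)$. Where you differ is the reduction, and your version is in fact more careful than the paper's. The paper asserts, directly from \eqref{eqn:cov}, that $\cC_1 G = \cC_0 G$ holds if and only if $\int_0^T G(t)e^{\lambda_n t}\,dt = 0$ for all $n$; strictly speaking, what \eqref{eqn:cov} gives under that hypothesis is $\cC_1^{-1}G = \cC_0^{-1}G$ (and only for $G$ in the domain where \eqref{eqn:cov} applies), which yields the theorem after the substitution $G \mapsto \cC_0^{-1}G$. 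Your Sherman--Morrison computation makes precisely this substitution explicit: at the covariance level the criterion is $\langle \cC_0 G, G_T\rangle = 0$, i.e.\ it is $\cC_0 G$, not $G$, that must annihilate the exponentials, so one needs the annihilating subspace $V$ to meet $\mathrm{Im}(\cC_0)$ nontrivially. You are right to flag this as the real remaining step; the paper elides it. It can be closed more simply than by your canonical-product construction: the M\"untz condition also gives incompleteness of $\{e^{\lambda_n t}\}$ in $L^2(a,b)$ for any subinterval $[a,b]\subset(0,T)$, so one may take an annihilator supported in $[a,b]$, extend by zero, and mollify---convolution multiplies the transform $\Phi(\lambda)$ by an entire factor, so annihilation is preserved---producing a nonzero annihilator in $C_c^\infty(0,T)$, which lies in $\mathrm{Im}(\cC_0)$ for the inverse-Laplacian priors of Section \ref{sec:bayes}. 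Note, finally, that for a completely arbitrary injective trace-class prior the condition $V\cap\mathrm{Im}(\cC_0)\neq\{0\}$ is a genuine additional hypothesis rather than a formality (a dense operator range can meet a fixed closed infinite-dimensional subspace trivially---for instance a range consisting of real-analytic functions avoids every subspace of functions supported in a proper subinterval), so your restriction to priors of differential type delimits the honest scope of the operator-level statement; this caveat applies equally, if tacitly, to the paper's own proof.
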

In other words, there are directions in which one can \emph{never} gain information by making an integrated observation as in (\ref{eqn:obs}).


\begin{proof}
It follows from \eqref{eqn:cov} that $G$ will have the desired property if and only if
\begin{align*}
	\int_0^T G(t) e^{\lambda_n t} dt = 0
\end{align*}
for all $n \geq 0$.

However, the growth estimate (\ref{eqn:growth}) for the adjoint Sturm--Liouville eigenvalues $\{\lambda_n\}$ immediately gives
\begin{align}
	\sum_{n=0}^{\infty} \frac{1}{1+|\lambda_n|} < \infty,
	\label{eqn:sum}
\end{align}
in which case it is known \cite{PW33b,R68} that the set of functions $\{\exp(\lambda_n t)\}$ is not complete in $L^2(0,T)$. This completes the proof.
\end{proof}

There are two features of the above proof that are deserving of comment. First, the difficulty in targeting observations comes from the fact that the observations are spatial, integrated over the atmospheric column at a fixed time, while the quantity of interest---the flux--- is temporal, evaluated at a fixed point in space over a given interval of time. For instance, if we instead consider the initialization problem (where the boundary data is fixed, and the initial condition $q_0$ is the quantity of interest), it is not hard to see that an observation at time $t_i$ decreases the covariance in the direction spanned by
\begin{align}
	\sum_{n=0}^{\infty} a_n e^{-\lambda_n t_i} \rho_n(z)
\end{align}
in $L^2(0,h)$. Here there are no unobservable subspaces, because the set $\{\rho_n\}$ spans $L^2(0,h)$.

Second, we note the significance of the parabolicity of the transport equation. Since the equation is first-order in time, the functions $\{G_i\}$ defined in (\ref{eqn:basis}) involve sums of exponentials $\exp(\lambda_n t)$, with $\lambda_n \approx cn^2$ for large $n$, for which (\ref{eqn:sum}) is satisfied. In, on the other hand, $q$ solved a wave equation (\emph{e.g.} $q_{tt} = Lq$), the resulting posterior covariance formulas would involve terms of the form $\exp(\sqrt{\lambda_n} t)$, for which we obtain
\begin{align*}
	\sum_{n=0}^{\infty} \frac{1}{1+\sqrt{\lambda_n}} = \infty,
\end{align*}
hence the phenomenon of unobservability is not present in the hyperbolic case.


\section*{Acknowledgments}
The author would like to thank Sean Crowell, David Kelly and Andrew Stuart for their helpful comments throughout the preparation of this work. This material is based upon work supported by the National Science Foundation under Grant Number DMS-1312906.

\bibliographystyle{plain}
\bibliography{4Dvar}

\end{document}